\newtheorem{remark}{Remark}}
\newtheorem{theorem}{Theorem}
\newtheorem{lemma}{Lemma}
\definecolor{commentcolor}{RGB}{255, 165, 0}
\newcommand{\roy}[1]{\textcolor{black}{#1}}
\newcommand{\Rb}{\mathbb{R}}
\title{$L^2$-based stability of blowup with log correction for semilinear heat equation}
\date{\today}
\begin{document}
\author{Thomas Y. Hou, Van Tien Nguyen, Yixuan Wang\footnote{\noindent\textbf{Corresponding author:} Yixuan Wang (\texttt{roywang@caltech.edu})}}
\maketitle

\begin{abstract}
We propose an alternative proof of the classical result of Type-I blowup with log correction for the semilinear \roy{heat} equation. Compared with previous proofs, we use a novel idea of enforcing stable normalizations for perturbation\roy{s} around the approximate profile and \roy{we} establish a weighted $H^k$ stability, thereby avoiding the use of a topological argument and the analysis of a linearized spectrum. \roy{Consequently,} this approach can be adopted even if we only have a numerical profile and do not have explicit information on the spectrum of its linearized operator. This result generalizes the $L^2$-based stability \roy{framework beyond exactly self-similar blowup} and can be adapted to higher dimensions. Numerical results corroborate the effectiveness of our normalization, even in the large perturbation regime beyond our theoretical setting.

    \end{abstract}
\section{Introduction}
We consider the semilinear heat equation \begin{equation}
    \label{sml}
    a_t=\Delta a + a^2\,,
\end{equation}
where $a(t): \Rb^n \to \Rb$. 
\roy{We impose the decay condition} $\lim_{|x| \to \infty} a(x,t) = 0$ \roy{with} initial data $a(0) = a_0$. 
Several blowup criteria were established in the past, dating back to the works of Kaplan \cite{Kapcpam63}, Fujita \cite{FUJsut66}, Levine \cite{LEVarma73},  Friedman-McLeod \cite{FMiunj85}, etc. We refer to the book \cite{quittner2019superlinear} for a comprehensive review on this subject.   Given our interest in the singularity formation, in particular in characterizing blowup solutions to \eqref{sml}, we \roy{focus on mentioning} works where a precise law of the blowup can be identified.

Singularity formation in nonlinear PDEs is \roy{often linked to symmetry groups of the underlying equation}. \roy{Equation \eqref{sml}} is invariant under the scaling  
\begin{equation}\label{def:scaling}
\forall \lambda > 0, \quad a_\lambda(x,t) = \frac{1}{\lambda^2}a\Big( \frac{x}{\lambda}, \frac{t}{\lambda^2}\Big)\,,
\end{equation}
in the sense that if $a$ is a solution to \eqref{sml}, so is $a_\lambda$ with the rescaled initial data $a_{0, \lambda} = \frac{1}{\lambda^2}a_{0}\big(\frac{x }\lambda \big)$. The earliest application of this scaling invarian\roy{ce} to equation \eqref{sml} that we are  aware of is the work by Berger-Kohn \cite{berger1988rescaling}, where the authors developed a  rescaling algorithm to capture the blowup profile 
\begin{equation}\label{stabProfile}
a(x, t) \sim \frac{1}{T-t} \bar{u}\left(\frac{x}{\sqrt{(T-t)|\log (T-t)|}}\right)\,, \quad \bar{u}(\xi)=\frac{1}{1+|\xi|^2 / 8}\,.
\end{equation}
This blowup behavior is in agreement with classification results rigorously established in the works of Filippas-Kohn \cite{filippas1992refined}, Filippas-Liu \cite{filippas1993blowup}, Herrero-Velazquez \cite{HVaihn93} and Velazquez \cite{velazquez1992higher} under the assumption of Type-I blowup, \roy{i.e.}  $$\limsup\limits_{t\to T}\|a(t)\|_{L^\infty} (T-t) < +\infty\,.$$ \roy{If the above limit is unbounded}, the blowup is of Type-II.
In particular, Herrero-Velazquez \cite{HVasnsp92} showed that the blowup behavior \eqref{stabProfile} is generic in dimension $1$, and they \roy{announced} the same for higher dimensions in an unpublished work. A rigorous construction was later established by Bricmont-Kupiainen \cite{bricmont1994universality} to provide concrete examples of initial data leading to blowup behaviors classified in the works mentioned above. The method developed in \cite{bricmont1994universality} was generalized in the work of Merle-Zaag \cite{merle1997stability} {(see also \cite{MZjems24}, \cite{NZaens17})}  through spectral analysis and a topological argument to establish the existence and stability of blowup solutions to \eqref{sml} with the asymptotic behavior \eqref{stabProfile}. 

It is worth mentioning that the scaling invariance \eqref{def:scaling} gives rise to the notion of energy-criticality in the sense that 
$$\|a_\lambda\|_{\dot{H}^1} = \lambda^{\roy{\frac{n}{2}-3}}\|a\|_{\dot{H}^1}.$$
\roy{Accordingly,} the problem \eqref{sml} is called energy-critical if $n = 6$, energy-subcritical if $n \leq 5$ and energy-supercritical if $n \geq 7$. It is well known that the only Type-I blowup occurs in the energy-subcritical case (i.e., $n \leq 5$) from the work of Giga-Kohn \cite{GKcpam85, GKiumj87, GKcpam89}, Giga-Matsui-Sasayama \cite{GMSiumj04} {(see also \cite{GMSmmas04} for the case of convex domains and Quittner \cite{Qduke21} for non-convex domains).}  The blowup in the energy-critical and supercritical cases is more subtle; in particular,  Type-II blowup may \roy{occur,} as predicted in \cite{HVcrasp94} and \cite{FHVslps00} \roy{via formal matched} asymptotic expansions. {Concrete examples of initial data leading to Type-II blowup for \eqref{sml} with a general nonlinearity $|a|^{p-1}a$ were exhibited in several works \cite{HVpre94}, \cite{Mma07}, \cite{Capde17},  \cite{CMRjams19},  \cite{PMWjfa21}, \cite{Sjfa12}, \cite{PMWam19}, \cite{PMWZdcds20}, \cite{Hihp20}, \cite{PMWZZarx20},  \cite{Hapde20} and references therein. In particular, the Type-II blowup constructed in \cite{Hapde20} for the energy-critical case in dimension $n = 6$ \roy{corresponds} to the exponent $p=2$ considered in this present work.} Partial classification of Type-II blowup was provided in  \cite{MMcpam04, MMjfa09} and \cite{CMRcmp17}, even though a complete classification of all blowup patterns still remains open. 

In this paper, we are interested in adopting the idea of dynamic rescaling to provide rigorous proofs for the semilinear heat equation with a clear notion of stability. Specifically, just like in numerical algorithms, we introduce proper rescaling conditions to ensure the stability of the perturbation around the approximate steady state, whose proof constitutes the main goal of this article.  We adopt a $L^2$-based stability analysis with properly chosen singular weights and normalization conditions, inspired by the line of work pioneered by \cite{chen2021finite, chen2019finite2}, and present our main result as Theorem \ref{thm1}. {We introduce the weighted Sobolev space $\mathcal{E}_k$ for $k = 2n + 10$: 
\begin{equation}\label{def:Ek}
\mathcal{E}_k = \Big\{u: \|u\|^2_{\mathcal{E}_k} = \| u\|^2_{\rho}+\mu\|\nabla^k u\|_{\rho_k}^2 < +\infty\Big\},
\end{equation}
where $\| \cdot\|_{\rho}$ stands for the weighted $L^2$-norm with the weight functions $\rho$, $\rho_k$ being defined in  \eqref{rho_def}, \eqref{rhok_def}, {and $\mu > 0$ is a fixed small constant (see \eqref{def_energy})}.} 

\begin{theorem}\label{thm1}
\roy{Let $k=2n+10$. There exist positive constants $C_0$ and $\lambda_0$ such that the following holds.
For any $\lambda\in(0,\lambda_0)$, if $g$ is even\footnote{We call a multivariate function $a$ even if it is even in each coordinate, i.e. $a(\xi_1 x_1,\dots,\xi_n x_n)=a(x_1,\dots,x_n)$ for all $\xi_i\in\{-1,1\}$.}
and satisfies $\|g\|_{\mathcal{E}_k}\le C_0\lambda$, then the solution to \eqref{sml} with initial data
\[
a(x,0)=\lambda^{-1}\big(\bar{u}(x)+g(x)\big)
\]
blows up in finite time $T<\infty$. Moreover, as $t\to T$ we have convergence in $\mathcal{E}_k$:
$$\lim _{t \rightarrow T}(T-t) a\left(((T-t)|\log (T-t)|)^{\frac{1}{2}} z, t\right)=\bar{u}(z).$$}
\end{theorem}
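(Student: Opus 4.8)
The plan is to work in the dynamically rescaled (self-similar with log correction) variables. Specifically, I would introduce rescaling parameters $\lambda(s)$ (the blowup speed) and a time reparametrization $s$ with $ds/dt = 1/\lambda(s)^2$, and write the solution as
\[
a(x,t) = \frac{1}{\lambda(s)^2}\,\big(\bar u(y) + v(y,s)\big), \qquad y = \frac{x}{\lambda(s)},
\]
so that $v$ solves a perturbation equation of the form $v_s = \mathcal{L} v + N(v) + (\text{modulation terms involving }\dot\lambda/\lambda)$, where $\mathcal{L}$ is the linearized operator around the stationary (rescaled) profile $\bar u$ and $N$ is quadratic. The log correction enters because the correct scaling law is $\lambda(s) \sim \sqrt{(T-t)|\log(T-t)|}$ rather than pure self-similar scaling; equivalently one carries an extra slowly-varying parameter (a second modulation parameter, call it $b(s)$, governing the logarithmic drift) so that the ansatz matches the known profile. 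The whole point of the paper, as advertised in the abstract, is to \emph{avoid} a topological/shooting argument: instead of leaving finitely many unstable directions to be killed by degree theory, I would impose explicit orthogonality/normalization conditions that \emph{define} $\lambda(s)$ and $b(s)$ (and possibly finitely many more parameters associated with the non-negative eigenvalues of $\mathcal{L}$ — the zero mode from scaling and the one associated with the log correction), so that the projection of $v$ onto those directions vanishes identically. The modulation ODEs for $\dot\lambda/\lambda$ and $\dot b$ are then obtained by differentiating the constraints.

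The core of the proof is then a weighted $\mathcal{E}_k$ energy estimate showing that $\|v(s)\|_{\mathcal{E}_k}$ stays small (in fact decays, or is controlled by a small multiple of $b$ or of $e^{-\delta s}$) for all $s$, given that it starts small. Here is the order I would carry it out. First, set up the rescaled equation and the modulation framework, and derive the (finite-dimensional) modulation ODEs, showing their right-hand sides are quadratically small in $v$ plus an explicitly computable main term coming from the imperfection of the approximate profile $\bar u$ (which is only approximately stationary once the log correction is included — this generates the logarithmic drift and is what forces the $|\log(T-t)|$ factor). Second, prove a coercivity/spectral-gap lemma: after the normalization conditions remove the non-negative part of the spectrum, $\langle \mathcal{L} v, v\rangle_\rho \le -\delta \|v\|_\rho^2 + C\|\nabla^k v\|^2 \cdot(\ldots)$ in the weighted $L^2$ inner product, and a parallel estimate for the top-order term $\langle \mathcal{L}\nabla^k v, \nabla^k v\rangle_{\rho_k}$ — this is where the specific choice $k = 2n+10$ and the singular weights $\rho, \rho_k$ matter, to get enough dissipation to beat the commutator and lower-order terms. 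Third, combine these into a differential inequality $\frac{d}{ds}\|v\|_{\mathcal{E}_k}^2 \le -\delta \|v\|_{\mathcal{E}_k}^2 + C\|v\|_{\mathcal{E}_k}^3 + (\text{small source})$, close the bootstrap, and conclude $\|v(s)\|_{\mathcal{E}_k}\to 0$. Fourth, translate back: integrate the modulation ODE for $\lambda$ to recover the precise rate $\lambda(s)^2 \sim (T-t)|\log(T-t)|$ and the finiteness of $T$, and note that $\|v(s)\|_{\mathcal{E}_k}\to 0$ is exactly the claimed convergence $(T-t)a(((T-t)|\log(T-t)|)^{1/2}z,t)\to \bar u(z)$ in $\mathcal{E}_k$.

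Two standard preliminary points need care before the energy estimate. The hypothesis gives initial data $a(x,0) = \lambda^{-1}(\bar u(x) + g(x))$ with $\|g\|_{\mathcal{E}_k}\le C_0\lambda$; note the overall prefactor is $\lambda^{-1}$, not $\lambda^{-2}$, so the initial rescaling parameter must be chosen appropriately (essentially $\lambda(0)$ comparable to $\lambda$) and one must check the normalization conditions can be met at $s=0$ by an implicit-function-theorem argument given $C_0$ small — i.e. the initial perturbation, after adjusting the finitely many modulation parameters, has vanishing projections and small $\mathcal{E}_k$ norm. Also, evenness of $g$ is preserved by the flow and is used to discard the odd eigenmodes of $\mathcal{L}$ (notably the translation modes), reducing the number of modulation parameters one has to track. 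Local well-posedness in $\mathcal{E}_k$ and propagation of these structural properties I would take as routine.

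The main obstacle, and the place where the novelty really has to deliver, is the coercivity estimate at top order combined with the modulation control. In the classical Merle–Zaag approach the spectral structure of $\mathcal{L}$ (Hermite-type, explicit eigenvalues $1 - j/2$) is used directly and the unstable/neutral modes are handled by the topological argument; here I instead must show that imposing the normalization conditions genuinely produces a coercive quadratic form \emph{on the orthogonal complement}, and — crucially — that this coercivity survives when measured in the singular weighted $\mathcal{E}_k$ norm rather than the natural Gaussian-weighted $L^2$ of the linear theory. The singular weights are needed to control the quadratic nonlinearity $v^2/\lambda^2$ (which is not small in Gaussian-weighted $L^2$ near the origin once you account for the $\lambda^{-2}$) and to get the high-order derivatives to close, but they degrade the dissipativity of $\mathcal{L}$, so the balance between the dissipation constant $\delta$, the weight exponents, the small parameter $\mu$ multiplying the top-order term in \eqref{def:Ek}, and the constant in the commutator estimate is delicate — getting $k = 2n+10$ to be large enough (and the weights chosen) so that all error terms are absorbed is the technical heart of the argument. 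A secondary difficulty is ensuring the modulation ODE for the log-correction parameter is \emph{stable} in the rescaled time (so that no topological argument is needed): this requires the normalization to be chosen so that the offending neutral/unstable modes are converted into parameters whose evolution is driven, to leading order, by the known (sign-definite) source term from $\bar u$'s imperfection, giving monotone-in-$s$ behavior that forces blowup with exactly the logarithmic rate.
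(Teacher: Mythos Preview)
Your proposal follows the classical Merle--Zaag template (spectral decomposition of the parabolically rescaled linearized operator, orthogonality to its non-negative modes, coercivity on the complement) and so misses the paper's main mechanism. The paper does \emph{not} work in parabolic self-similar variables and does \emph{not} project onto eigenfunctions or invoke any spectral gap. Instead it introduces \emph{two independent} scaling factors, $\hat C_u$ for amplitude and $\hat C_{l,i}$ for each spatial direction, so that the rescaled equation reads $\hat u_\tau = \hat c_u \hat u - \sum_i \hat c_{l,i} z_i \hat u_i + \hat u^2 + \sum_i \lambda_i \hat u_{ii}$ with $\lambda_i = \hat C_u/\hat C_{l,i}^2$. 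The profile $\bar u$ is an exact steady state of the \emph{inviscid} part of this equation, and the Laplacian term carries the small coefficient $\lambda_i\sim 1/\tau$; it is treated as a perturbation, not as the source of dissipation. Your expectation that the viscous term drives the decay and that Hermite eigenvalues $1-j/2$ are relevant is therefore the wrong picture here.

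The normalization conditions are not orthogonality constraints but \emph{pointwise vanishing conditions at the origin}: one fixes $c_u, c_{l,i}$ so that $u(0,\tau)=u_{ii}(0,\tau)=0$ (hence $u=O(|z|^4)$ by evenness). This is what makes the singular weight $\rho=|z|^{-5-n}+10^{-3}|z|^{1-n}$ admissible, and the damping in $\|u\|_\rho^2$ comes from a direct elementary computation: integrating the transport term by parts against $\rho$ yields a multiplier $-1+\tfrac14\frac{(z_i\rho)_i}{\rho}+2\bar u$, which one checks by hand is $\le -\tfrac14$. No spectral information is used. Similarly, at order $k$ the weight $\rho_k=1+10^{-3k}|z|^{2k+1-n}$ is chosen so the analogous multiplier is $\le -\tfrac12$; the role of $k=2n+10$ is only to close the Sobolev embedding $\|\nabla^4 u\|_\infty\lesssim E_0+E_k$, not to beat commutators against a parabolic generator. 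Finally, the log correction does not come from a second modulation parameter $b$ but directly from the ODE $\lambda_\tau\approx -\lambda^2$ produced by the vanishing conditions, and the bootstrap is run on the pair $(E\gamma,\gamma)$ with $\gamma=1/\lambda$, not on $E$ alone, because the source term in the energy inequality is $O(\lambda)$ rather than exponentially small.
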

\begin{remark}
    Using the scaling invariance of \eqref{sml}, we can introduce an initial rescaling in space (corresponding to introducing $\hat{C}_l(0)$  in the dynamic rescaling formulation \eqref{cl} in Section \ref{sec:dy}) to obtain a result comparable to the theorems in \cite{bricmont1994universality,merle1997stability} that characterize the blowup time precisely. Here we highlight obtaining the correct rate and, for the sake of simplicity, do not rescale in space at $t=0$.

\end{remark}\begin{remark} We work under the even assumption for illustration purposes of the dynamical rescaling technique developed in this paper. We refer to our subsequent work \cite{chen2024stability} on the complex Ginzburg-Landau equation for a generalized dynamical rescaling technique to remove the even assumption, where translational and rotational modulations were introduced; see step 1 in Section 1.2 therein.
\end{remark}
Compared with most of the aforementioned works on semilinear equations that work in parabolic scaling, we work in variables that correspond to the true blowup scaling and obtain stability precisely with respect to the weighted $H^k$ norms we construct, instead of resorting to a topological argument to identify the existence of the initial data for blowup.
\subsection{Literature review and main contributions}
The idea of dynamic rescaling formulation or the modulation technique to study blowup was originally introduced in the numerical study of self-similar blowup of the nonlinear Schr\"odinger equation  \cite{weinstein1985modulational,soffer1990multichannel,soffer2006soliton,mclaughlin1986focusing,landman1988rate}. Later on, the formulation has been generalized to various dispersive problems, both as numerical techniques and as an analysis tool; see for example nonlinear Schr\"odinger equation \cite{kenig2006global,merle2005blow}, compressible Euler equations \cite{buckmaster2019formation},  and the nonlinear heat equation \cite{berger1988rescaling,merle1997stability}. Recently, this modulation technique has been adopted to establish self-similar singularity for incompressible Euler equations in \cite{ elgindi2021finite, chen2019finite2, chen2022stable}.

When the equation admits an analytical approximate profile for blowup, analyzing the spectrum of the linearized operator has proven to be useful for establishing the blowup in many cases; see for example, the semilinear heat equation \cite{bricmont1994universality, merle1997stability} and the 2D Keller-Segel equation \cite{raphael2014stability, collot2022refined}. While this methodology is powerful, it hinges on the fact that we are able to construct a simple and analytical approximate steady state and can analyze the spectrum of the linearized operator explicitly (for semilinear heat equations) or at least asymptotically (for Keller-Segel equations). In this paper, we provide a proof of blowup for the semilinear heat equation without analyzing the eigenvalues or the eigenfunction of the linearized operator at all, and we rule out the unstable directions via a clear characterization of a singularly weighted Sobolev space, instead of using Brouwer's fixed-point theorem and a topological argument. {This framework can be adopted even if we
only have a numerical or implicit profile and do not have explicit information on the
spectrum of its linearized operator; see the follow-up work by the last author and collaborators \cite{liu2025finite} on 3D Keller-Segel equation with logistic damping.} 

On the other hand, a direct $L^2$ \cite{chen2021finite} or $L^\infty$-based \cite{chen2022stable} stability argument with appropriate normalization conditions has been proven successful, even if no explicit approximate steady state can be identified. In fact, they are often combined with a numerical profile and rigorous computer-assisted proofs. See  \cite{chen2021finite, chen2020singularity, hou2024blowup} for applications in various 1D models for the Euler equations, \cite{ chen2019finite2, chen2022stable} for 3D axisymmetric incompressible Euler equations, and { \cite{MRRSam22a} for the compressible Navier-Stokes equation (and the accompanying paper on the compressible Euler equation \cite{MRRSam22b}). The nature of the blowup in \cite{MRRSam22b, MRRSam22a} and the current paper is quite similar in the sense that the dominant behavior of the rescaled equations is driven by the inviscid part, although in each case one must use different scaling to get the precise asymptotic behavior.} The methodology can be roughly summarized in the following two steps. Firstly, we link self-similar singularity with convergence to a steady state using the dynamic rescaling equation and obtain approximate steady states either analytically or numerically. Then, upon choosing appropriate normalization conditions, we can perform linear and nonlinear stability estimates to show that the perturbation around the approximate steady state remains small. Therefore, we can obtain an asymptotically self-similar blowup with rates prescribed by the normalizing constants.

This article adopts the $L^2$-based methodology to establish blowups beyond the self-similar setting; see also \cite{collot2023stable} on the 1D inviscid primitive equation and \cite{van2003formal} on the harmonic map heat flow, where log corrections were also observed. We show that one can obtain the correct blowup rate by imposing proper vanishing conditions on the perturbation, without a priori knowledge of a formal blowup rate. Compared with a self-similar blowup, the crucial difference is that now we have an algebraic, instead of exponential, convergence of the normalizing constants in the rescaled time $\tau$, inferred for example, by \eqref{cu}. 

Another contribution is that we introduce different spatial rescalings in $n$ different dimensions in Section \ref{highd}, giving enough degrees of freedom for the normalization conditions. Those different rescaling constants in different dimensions will indeed converge to the same rescaling constant close to the blowup time. This approach may shed some light on the generalization of the dynamic rescaling framework to higher dimensions for other problems. We believe our method is robust for Type-I singularities, especially for problems having non-self-adjoint linear operators, see for example our follow-up work  \cite{chen2024stability} on the complex Ginzburg-Landau equation.
 Finally, we demonstrate our choice of normalization to be effective even beyond the regime of small perturbations, in Section \ref{sec:num} based on numerical experiments. 

 \subsection{Notations}
Throughout the article, we use $(\cdot,\cdot)$ to denote the inner product on $L^2(\mathbb{R}^n)$: $(f,g)=\int_{\mathbb{R}^n}fgdx$.     We use $C$ to denote absolute constants dependent only on the dimension $n$, which may vary from line to line. We use $A\lesssim B$ for positive $B$ to denote that there exists a constant $C>0$ such that $A\leq CB$.  We adopt the notation of the Japanese bracket as $\langle z\rangle=\sqrt{1+|z|^2}\,.$
\section{Dynamic rescaling formulation and normalization conditions}
\label{sec:dy}
In this section, we \roy{present} our dynamic rescaling formulation. Via enforcing local vanishing modulation conditions, we derive the law of the blowup formally and motivate the choices of singular weights \roy{used in the} stability analysis.
\subsection{1D case}
We focus on the 1D case first and generalize to higher dimensions in Section \ref{highd}.
For the semilinear heat equation \eqref{sml}, we introduce the dynamic rescaling formulation
\begin{equation*} \label{def:renormua}
\hat{u}(z,\tau)=\hat{C}_u(\tau) a(\hat{C}_l(\tau)z,t(\tau))\,,
\end{equation*}
where 
\begin{equation}\label{def:CuCl}
\hat{C}_u=\hat{C}_u(0)\exp{\Big(\int_0^\tau \hat{c}_u(s) ds\Big)},\;  \hat{C}_l=\exp{\Big(-\int_0^\tau\hat{c}_l(s) ds\Big)},\; t=\int_0^\tau \hat{C}_u(s) ds,
\end{equation}with $\hat c_u$ and $\hat c_l$ being determined. We introduced the extra degree of freedom $\hat{C}_u(0)$ as in \cite{chen2020singularity, hou2024blowup}, which we will later choose to be small \roy{to control} the viscous term.
The renormalized equation for $\hat u$ reads 
\begin{equation}\label{eq:uhatztau}
    \hat{u}_\tau=\hat{c}_u \hat{u}-\hat{c}_l z\hat{u}_z+\hat{u}^2+\frac{\hat{C}_u}{\hat{C}_l^2}\hat{u}_{zz}.
\end{equation}

    As we shall show later, $\frac{\hat{C}_u}{\hat{C}_l^2}$ goes to zero as $\tau \to \infty$.  \roy{We} consider the approximate \roy{steady state} $\bar{u}=(1+z^2/8)^{-1}$ \roy{of the inviscid part of} \eqref{eq:uhatztau}, namely
    $$ \bar{c}_u\bar{u}-\bar{c}_l z\bar{u}_z+\bar{u}^2=0, \quad \textup{for}\;\; \bar{c}_u=-1\,,\quad\bar{c}_l=1/2\,.$$
 By using the dynamic rescaling formulation, we reduce the problem of establishing a blowup in the physical variables and quantifying its blowup rate to the problem of establishing stability \roy{of the perturbation around this approximate steady state $\bar{u}$,} in the dynamic rescaling formulation {\eqref{eq:uhatztau}}. We want to show that $\hat{u}$ converges to the steady state $\bar{u}$ of the dynamic rescaling equation, and the normalization constants also converge.
We {consider} the ansatz  $$\hat{u}=\bar{u}+u, \quad  \hat{c}_u=\bar{c}_u+c_u, \quad  \hat{c}_l=\bar{c}_l+c_l\,.$$
We will \roy{explain how to choose $c_u$ and $c_l$ via normalization conditions so that the perturbation $u$ remains small for all $\tau\ge 0$, which in turn yields the correct blow-up scaling.}

 {If we enforce that $u$  is an even function satisfying $u(0, \tau) = u_{zz}(0, \tau) = 0$ for all time $\tau$, we have by the dynamic rescaling equation \eqref{eq:uhatztau},}
\begin{equation*}\label{cons_hat}
    \hat{c}_u+\bar{u}(0)+\frac{\hat{C}_u \bar{u}_{zz}(0)}{\hat{C}_l^2 \bar{u}(0)}=0\,,\hat{c}_u-2\hat{c}_l+2\bar{u}(0)+\frac{\hat{C}_u (\bar{u}_{zzzz}(0)+{u}_{zzzz}(0))}{\hat{C}_l^2 \bar{u}_{zz}(0)}=0\,.
\end{equation*}

 Define 
$$\lambda (\tau)=\frac{\hat{C}_u (\tau)}{\hat{C}_l^2(\tau)}=\hat{C}_u(0)\exp{\Big(\int_0^\tau ({c_u(s)}+2c_l(s)) ds\Big)}\,.$$
We can simplify the normalization constraints into $${c_u}-\frac{1}{4}\lambda=0\,,\quad{c_u}-2{c_l}-\big(\frac{ 3}{2}+4{u}_{zzzz}(0)\big)\lambda=0\,.$$
This gives 
\begin{equation}\label{cu}
{c_u}=\frac{1}{4}\lambda\,, \quad {c_l}=-(\frac{ 5}{8}+2{u}_{zzzz}(0))\lambda\,,\end{equation}
from which we {derive} the ODE for $\lambda$ as 
\begin{equation}\label{lamb}
    \lambda_\tau=\lambda(c_u+2c_l)=-(1+4{u}_{zzzz}(0))\lambda^2\,.
\end{equation}
\begin{remark}
    \roy{When the perturbation $u_{zzzz}(0)$ is small, the ODE \eqref{lamb} suggests} $$\lambda\approx \frac{1}{\tau}  \approx \frac{1}{|\log{(T-t)}|}\,.$$ Therefore, the factor $\frac{\hat C_u}{\hat C_l^2} \approx \frac{1}{\tau}$, \roy{and}  the effect of the viscosity term in \eqref{eq:uhatztau} can be treated perturbatively. We will make this heuristic argument rigorous later on by choosing $\hat{C}_u(0)$ small.
\end{remark}
\begin{remark}\label{rmk2}
\roy{To motivate our normalization conditions, we consider a singular weight $\rho(z)\sim |z|^{-\alpha}$ near $z=0$ in the $L^2$ energy estimate. A formal integration by parts suggests that, near the origin, the linear terms yield damping of the form
\[
(u_\tau,u\rho)\approx\Big(-1+\frac14\frac{(\rho z)_z}{\rho}+2\Big)(u,u\rho)
=\Big(1-\frac{\alpha-1}{4}\Big)(u,u\rho).
\]
Thus we need $\alpha>5$ to obtain linear damping, which in turn motivates enforcing that $u$ vanishes at $z=0$ up to third order (so that the weighted energy is finite and coercive). Since we also need to control nonlinear terms, the actual weights we use cannot be as simple as $\rho(z)=|z|^{-6}$, but this provides a useful starting point for the stability analysis.}
\end{remark}

\subsection{nD case}\label{highd}
{The crucial idea} in the $n$-dimensional case is \roy{to} introduce $n$ \roy{distinct spatial scaling parameters, one in each coordinate direction. This provides enough freedom to impose normalization conditions that enforce the same vanishing orders at the origin.} Consider
\begin{equation}
    \label{scaling:nd}\hat{u}(z,\tau)=\hat{C}_u(\tau) a(\hat{C}_{l,1}(\tau)z_1,\hat{C}_{l,2}(\tau)z_2,\cdots,\hat{C}_{l,n}(\tau)z_n,t(\tau))\,,\end{equation} with the same $\hat{C}_u$ and $t(\tau)$ {introduced in \eqref{def:CuCl}} and 
    \begin{equation}
    \label{cl}\hat{C}_{l,i}(\tau)=\exp{\Big(\int_0^\tau -\hat{c}_{l,i}(s) ds\Big)}\,.
\end{equation}\roy{Define
$$\lambda_i(\tau)=\frac{\hat{C}_u(\tau)}{(\hat{C}_{l,i}(\tau))^2}=\hat{C}_u(0)\exp{\Big(\int_0^\tau \big({c_u(s)}+2c_{l,i}(s)\big) ds\Big)}\,.$$}
The equation for $\hat{u}$ {reads as} 
\begin{equation}\label{drf}
\hat{u}_\tau=\hat{c}_u \hat{u}-\sum_i\hat{c}_{l,i}z_i \hat{u}_{i}+\hat{u}^2+\sum_i\lambda_i\hat{u}_{ii}\,,
\end{equation}
where we use the short-hand notation for partial derivatives: we denote $f_{i}=\partial_{z_i}f$ and $f_{ij}=\partial_{z_j}\partial_{z_i}f$.
Using the same radial approximate steady state $\bar{u},\bar{c}_l,\bar{c}_u$ and a similar ansatz
\begin{equation}
\label{ansatz}\hat{u}=\bar{u}+u\,,\quad \hat{c}_u=\bar{c}_u+c_u\,, \quad\hat{c}_{l,i}=\bar{c}_l+c_{l,i}\,,
\end{equation}
we can enforce the same normalization condition that {$u$ is of $O(|z|^4)$ as $|z| \to 0$}. Notice that if we choose  $u$ to be an even perturbation, we only need to enforce {$u(0, \tau)= {u}_{ii}(0, \tau)=0$. These} $n+1$ constraints can be solved exactly to obtain 
\begin{equation}
\label{cun}
    {c_u}=\frac{1}{4}\sum_i\lambda_i\,, \quad {c_{l,i}}=-\sum_j\lambda_j\big(\frac{ 1+4\delta_{ij}}{8}+2{u}_{iijj}(0)\big)\,,
\end{equation}
where $\delta_{ij}=1$ if $i=j$, and $0$ otherwise.
We obtain the ODE {for $\lambda_i$,}
\begin{equation}
\label{lambn}
\partial_\tau\lambda_i=\lambda_i(c_u+2c_{l,i})=-\big(\sum_j4{u}_{iijj}(0)\lambda_j+\lambda_i\big)\lambda_i\,.
\end{equation}
Notice that  \eqref{cun}, \eqref{lambn} are consistent with  \eqref{cu}, \eqref{lamb} derived for the 1D case.
\begin{remark}
The vanishing conditions that $u=O(|z|^4)$ near the origin for the even perturbation can be guaranteed by the dynamic constraint in \eqref{cun}. On the other hand, it is also equivalent to an appropriate rescaling of $a$ at each time $\tau(t)$. To be precise, it amounts to $\hat{u}(0,\tau)=\hat{u}(0,0)$ and $\hat{u}_{ii}(0,\tau)=\hat{u}_{ii}(0,0)$, and by \eqref{scaling:nd}, it is equivalent to
    \begin{equation}
\label{32}\hat{C}_u(\tau)a(0,t)=\hat{C}_u(0)a(0,0),\quad \hat{C}_u(\tau)(\hat{C}_{l,i}(\tau))^2a_{ii}(0,t)=\hat{C}_u(0)a_{ii}(0,0)\,.
 \end{equation}This formulation will be useful in the local well-posedness in Appendix \ref{sec:appendix}.
\end{remark}
 \section{Stability of perturbation and blowup}\label{sec3}
 Building upon the general strategy of a weighted $L^2$-based stability argument as in \cite{chen2021finite, chen2019finite2}, we will prove Theorem \ref{thm1} in this section. We \roy{set} $\lambda=\max_i{\lambda_i}$.

\subsection{$L^2$ stability analysis}\label{subl2}

Plugging in the ansatz \eqref{ansatz} into the dynamic rescaling equation \eqref{drf} and using the fact that $\bar{u}$ is an approximate steady state, we write down the evolution equation for $u$ as follows:
\begin{equation}\label{eq:uztau}
    {u}_\tau=L(u)+N(u)+\sum_i F_i(z, \tau)+ \sum_i \lambda_i V_i(u)\,,
\end{equation}
where we reorganize the different terms into the linear, nonlinear, error, and viscous terms respectively as $$\quad L=(-1+{c_u}) {u}- \sum_i(\frac{1}{2}+{c_{l,i}})z_i {u}_{i}+2\bar{u}u\,,\quad N={u}^2\,,$$
$$ F_i=\frac{1}{4}\lambda_i\bar{u}-{c_{l,i}}z_i\bar{u}_i+\lambda_i (\bar{u}_{ii}+\sum_j\frac{1}{2}u_{iijj}(0)z_j^2\chi(|z|))\,,$$
$$ V_i={u}_{ii}-\sum_j\frac{1}{2}u_{iijj}(0)z_j^2\chi(|z|)\,.$$
\roy{Here $\chi=\chi(r)$ is a smooth cutoff on $[0,\infty)$ with $\chi(r)=1$ for $r\le1$ and $\chi(r)=0$ for $r\ge2$.} We introduce such a cutoff function to make each one of the four terms integrable in the weighted $L^2$ space. We name and group the terms in such a way that is convenient for our analysis. The ``linearized operator $L$'' is obtained by treating the scaling parameters $c_u$ and $c_l$ as known parameters, although they actually depend on $u$. As a result, $L$ actually contains nonlinear terms in the original physical variables.

To show that the dynamic rescaling equation is stable for even perturbations and converges to a steady state, we will perform {an energy estimate in $L^2_\rho$: the weighted $L^2$ space with the weight and the associated norm defined by} 
\begin{equation}
\label{rho_def}\rho=|z|^{-5-n}+10^{-3}|z|^{1-n}\,,\quad \|f\|_{\rho}=(f^2,\rho)^{1/2}\,.
\end{equation}
We choose such a weight to extract damping near the origin as {mentioned in} Remark \ref{rmk2}, while also having 
good control of growth at infinity, handling the nonlinear estimates easier. 

Via an integration by parts\footnote{\roy{The integrations by parts here (and below) are justified by cutoff and mollification. Fix $\varepsilon\in(0,1)$ and choose a smooth radial cutoff $\eta_\varepsilon$ supported in the annulus $\{\varepsilon\le |z|\le 1/\varepsilon\}$ with $\eta_\varepsilon\equiv1$ on $\{2\varepsilon\le |z|\le 1/(2\varepsilon)\}$. Since $\rho(z)\sim |z|^{-n-5}$ near $0$ and $\rho(z)\sim |z|^{1-n}$ as $|z|\to\infty$, and since $u(z,\tau)=O(|z|^4)$ at the origin and $u\in L^2_\rho$ at infinity, we have $\|\eta_\varepsilon u-u\|_{L^2_\rho}\to0$ as $\varepsilon\to0$ (and similarly for the derivatives appearing in the identities). On the annulus, the weighted norms are equivalent to unweighted $L^2$ norms, so we can further approximate $\eta_\varepsilon u$ by $C_c^\infty$ functions via convolution; for such smooth compactly supported approximants, the boundary terms vanish. Passing to the limit using Cauchy--Schwarz yields the stated identities.}}, we have the standard $L^2$ estimate for the linear part:
\begin{equation*}
    (L,u\rho)=([(-1+{c_u})+\frac{1}{2} \sum_i(\frac{1}{2}+{c_{l,i}})\frac{(z_i\rho)_i}{\rho}+2\bar{u}]u,u\rho)\,.
\end{equation*}
We plug in the singular weight \eqref{rho_def}, using $O(\lambda)$ notations due to the form \eqref{cun}, and simplify as
$$\begin{aligned}
    &(-1+{c_u})+\frac{1}{2} \sum_i(\frac{1}{2}+{c_{l,i}})\frac{(z_i\rho)_i}{\rho}+2\bar{u}\\=&O((1+|\nabla^4u(0)|)\lambda)-\frac{1}{4}+\frac{0.006}{4 (10^{-3}+\roy{|z|}^{-6})}-\frac{2\roy{|z|}^2}{8+\roy{|z|}^2}\,.
\end{aligned}$$
By a straightforward computation and the AM-GM inequality we have $$0.006(8+\roy{|z|}^2)-4 (10^{-3}+\roy{|z|}^{-6})2\roy{|z|}^2=0.048-0.002\roy{|z|}^2-8\roy{|z|}^{-4}\leq 0\,.$$
Therefore, we have the simple linear stability
\begin{equation*}
    (L,u\rho)\leq(-\frac{1}{4}+O((1+|\nabla^4u(0)|)\lambda))(u,u\rho)\leq(-\frac{1}{4}+C(1+|\nabla^4u(0)|)\lambda)\|u\|_{\rho}^2\,.
\end{equation*}

The estimate of the nonlinear term is straightforward:
$$(N,u\rho)\leq\|u\|_{\infty}\|u\|_{\rho}^2\,.$$

{We now estimate the generated error term. A direct computation yields} $\bar{u}_{ii}=-\frac{\bar{u}^2}{4}+\frac{z_i^2\bar{u}^3}{8}$. As a consequence, we use Fubini's \roy{theorem} to obtain 
$$\begin{aligned}
\sum_{i}F_i&=\frac{\sum_i\lambda_i}{4}(\bar{u}+\frac{1}{2}z\cdot\nabla\bar{u}-\bar{u}^2)+\sum_i\frac{\lambda_i}{2}(z_i\bar{u}_i+\frac{z_i^2\bar{u}^3}{4})\\&+\sum_j\sum_i\frac{\lambda_i u_{iijj}(0)}{2}(4z_j\bar{u}_j+z_j^2\chi(|z|))\,.
\end{aligned}$$Using the identity $\bar{u}+\frac{1}{2}z\cdot\nabla\bar{u}-\bar{u}^2 = 0$, the first term vanishes. We then simplify the remaining terms by using $z_i\bar{u}_{i}=-z_i^2\frac{\bar{u}^2}{4}$ to get 
\begin{equation*}
    \sum_{i}F_i=-\sum_i\lambda_iz_i^2|z|^2\frac{\bar{u}^3}{64}+\sum_j\sum_i\frac{\lambda_i u_{iijj}(0)z_j^2}{2}(\chi(|z|)-\bar{u}^2)\,.
\end{equation*}
We hence know that the error term is $O(|z|^4)$ at $z=0$ and $O(|z|^{-2})$ at $\infty$. \roy{Therefore,  we have {$\sum_{i}F_i\in L^2_\rho$}}. We conclude that {$$(\sum_{i}F_i,u\rho)\leq C(1+|\nabla^4u(0)|)\lambda\|u\|_{\rho}\,.$$}

The viscous part is more subtle and needs to {be handled carefully, \roy{due to} the singular weight $\rho$}. We recall from \eqref{rho_def} the estimate $$|\nabla^2\rho|\lesssim \rho/|z|^2\,.$$
We do integration by parts twice to get
\begin{equation*}
\begin{aligned}
    (V_i,u\rho)&=(-\frac{1}{2}u_{iijj}(0)\roy{z_j^2\chi(|z|)}|z|,\frac{u}{|z|}\rho)-(u_i,u_i\rho)-(u_i,u\rho_i)\\&\leq-\|u_i\|_{\rho}^2+C(|\nabla^4u(0)|^2+\|\frac{u}{|z|}\|_{\rho}^2)\,.
\end{aligned}
\end{equation*}
 {To estimate the last term, we split the integral into $|z| \leq 1$ and $|z| \geq 1$ and use the fact that $u(z,\tau) = O(|z|^4)$ for $|z| \ll 1$ to obtain
$$\|\frac{u}{|z|}\|_{\rho}^2\lesssim\int_{|z| \leq 1}\frac{u^2}{|z|^{7+n}}+\int_{|z| \geq 1}{u^2}{\rho}\lesssim(\sup_{|z| \leq 1}\frac{|u|}{|z|^4})^2+\|u\|_{\rho}^2\lesssim\|\nabla^4u\|_{\infty}^2+\|u\|_{\rho}^2\,.$$
}

\roy{Denoting} $E^2_0(u)=(u,u\rho)$ \roy{and collecting} all the above estimates, we arrive at  the weighted-$L^2$ estimate
\begin{align}\label{L2-col}
    &\frac{1}{2}\partial_\tau E^2_0=(L+N+\sum_i(F_i+\lambda_iV_i),u\rho) \\
    & \leq(-\frac{1}{4}+C(1+\|\nabla^4u\|_{\infty})\lambda+\|u\|_{\infty})E^2_0 +C\lambda(1+\|\nabla^4u\|_{\infty}) E_0+C\lambda\|\nabla^4u\|_{\infty}^2\,.\nonumber
    \end{align}
To close the weighted-$L^2$ estimate, we need higher-order estimates that we perform in the next subsection.
\subsection{Higher order stability analysis}
\label{hknorm}
Consider the weighted $H^k$ norm for $k=2n + 10$:
\begin{equation}\label{rhok_def}
    E^2_k(u)=(\nabla^ku,\nabla^ku\rho_k)\,,\quad \rho_k=1+10^{-3k}|z|^{2k+1-n}\,,
\end{equation} 
and {we write from \eqref{eq:uztau} the energy identity}
\begin{equation*}
    \frac{1}{2}\partial_\tau E^2_k=(\nabla^kL+\nabla^kN+\sum_i(\nabla^kF_i+\lambda_i\nabla^kV_i),\nabla^ku\rho_k)\,.
\end{equation*}
We will need the following lemma concerning interpolation inequalities of lower order terms and $L^\infty$ estimates of a Morrey-type. We define the weighted auxiliary norms 
\begin{equation}
\|u\|_{Q_j}=\|\nabla^j u\langle z\rangle^{j+(1-n)/2}\|_{2}.\label{Dj}
\end{equation}

By \eqref{rho_def} and \eqref{rhok_def}, we know that $\|u\|_{Q_0}\lesssim E_0$ and $\|u\|_{Q_k}\lesssim E_k$.
\begin{lemma}\label{lem:int}
    For any $\nu>0$, there exists $C = C(\nu) > 0$ such that 
    $$\|u\|_{Q_j}\leq\nu \|u\|_{Q_k}+C(\nu)\|u\|_{Q_0}\,,\quad 0\leq j<k\,,$$
    $$\|\nabla^j u\langle z\rangle^{i+1/2}\|_{\infty}\lesssim\|\nabla^{j+n}{u}\langle z\rangle^{i+({n+1})/{2}}\|_2,\quad i, j \geq 0\,.$$
\end{lemma}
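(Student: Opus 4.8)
\textbf{Proof proposal for Lemma \ref{lem:int}.}

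The plan is to prove the two assertions separately, both by reducing to standard weighted Sobolev inequalities on $\mathbb{R}^n$. For the first (interpolation) inequality, I would first record that the weights $\langle z\rangle^{j+(1-n)/2}$ appearing in $\|\cdot\|_{Q_j}$ are chosen precisely so that, under dyadic rescaling $z\mapsto 2^m z$, the quantity $\|\nabla^j u\,\langle z\rangle^{j+(1-n)/2}\|_{L^2(\text{annulus})}$ is scale-invariant in the homogeneous sense; this is what makes a clean interpolation possible. The cleanest route is a Gagliardo--Nirenberg type argument: on each dyadic annulus $A_m=\{2^{m}\le |z|\le 2^{m+1}\}$ one has the interpolation $\|\nabla^j v\|_{L^2(A_m)}\lesssim \|\nabla^k v\|_{L^2(2A_m)}^{j/k}\|v\|_{L^2(2A_m)}^{1-j/k}+2^{-mk}\|v\|_{L^2(2A_m)}$ (the last term handling the low-frequency/constant part), then multiply through by the appropriate power of $2^m$ matching the $Q_j$ weight, apply Young's inequality with parameter $\nu$ to split the product into a $\nu\,(\cdot)_{Q_k}$-term and a $C(\nu)\,(\cdot)_{Q_0}$-term on each annulus, and finally sum over $m\in\mathbb{Z}$. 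Since the weight powers are consistent, the annular sums reassemble into the global norms $\|u\|_{Q_k}$, $\|u\|_{Q_0}$ with the dyadic overlaps contributing only an absolute constant.

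For the second (Morrey-type $L^\infty$) inequality, the idea is the classical Sobolev embedding $H^{n}(\mathbb{R}^n)\hookrightarrow L^\infty$ applied after a localization-and-rescaling. Fix $z_0$ with $|z_0|\sim R$; on the ball $B(z_0, R/2)$ the weight $\langle z\rangle^{i+1/2}$ is comparable to $R^{i+1/2}$ and $\langle z\rangle^{i+(n+1)/2}$ to $R^{i+(n+1)/2}$, and rescaling this ball to unit size converts $n$ derivatives' worth of $L^2$ control into $L^\infty$ control, with the powers of $R$ from the Jacobian and from the derivative rescaling combining to exactly bridge the gap $R^{i+1/2}$ versus $R^{i+(n+1)/2}$. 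Summing (or rather taking the supremum) over all such $z_0$ gives the pointwise bound for $|z|\gtrsim 1$; for $|z|\lesssim 1$ one uses the ordinary $H^n\hookrightarrow L^\infty$ embedding on the unit ball, where all weights are $\sim 1$. Applying this with $v=\nabla^j u$ yields the stated estimate.

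I expect the only genuine subtlety — and hence the step deserving the most care — to be the bookkeeping in the dyadic summation for the interpolation inequality: one must verify that the exponents of $2^m$ produced by (a) the $Q_j$ weight, (b) the rescaling of the annular Gagliardo--Nirenberg inequality, and (c) Young's inequality all align so that the resulting series is exactly $\sum_m [\,\nu\,(\text{piece of }\|u\|_{Q_k}^2 \text{ on } A_m) + C(\nu)(\text{piece of }\|u\|_{Q_0}^2)\,]$, with no leftover powers of $2^m$ that would make the sum diverge. This is a routine but slightly delicate computation; once the scale-invariance of the $Q_j$ scaling is made explicit it falls into place. The $L^\infty$ estimate is essentially bookkeeping of the same flavor but shorter, since there is no summation, only a supremum over dyadic shells. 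Everything else is standard elliptic/Sobolev machinery, and no compactness or spectral input is needed.
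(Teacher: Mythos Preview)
Your approach to the first (interpolation) inequality is correct but heavier than the paper's. The paper bypasses dyadic decomposition entirely: one integration by parts yields $\|u\|_{Q_j}^2 \lesssim \|u\|_{Q_{j-1}}(\|u\|_{Q_{j+1}}+\|u\|_{Q_j})$, and then weighted AM--GM gives $\|u\|_{Q_j}^2 \le \nu\|u\|_{Q_{j+1}}^2 + C(\nu)\|u\|_{Q_{j-1}}^2$, after which a short induction on $k$ finishes. Your dyadic Gagliardo--Nirenberg route reaches the same conclusion but trades a two-line computation for Littlewood--Paley bookkeeping. (A minor slip: the lower-order remainder in your annular interpolation scales as $2^{-mj}$, not $2^{-mk}$, though this is harmless.)

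For the second inequality there is a genuine gap. The localized embedding $H^n(B_1)\hookrightarrow L^\infty(B_1)$ necessarily involves the full $H^n$ norm, not just the top-order seminorm: on any bounded ball one has $\|v\|_{L^\infty}\lesssim \|\nabla^n v\|_{L^2}+\|v\|_{L^2}$, and the zeroth-order term cannot be dropped (test on constants). After rescaling $B(z_0,R/2)$ to unit size, this term becomes $R^{-n/2}\|\nabla^j u\|_{L^2(B(z_0,R))}$, and multiplying by $R^{i+1/2}$ leaves a residual of the form $\|\nabla^j u\,\langle z\rangle^{i+(1-n)/2}\|_{2}$ on the right-hand side---a quantity \emph{not} controlled by $\|\nabla^{j+n}u\,\langle z\rangle^{i+(n+1)/2}\|_2$ alone. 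The paper avoids this by a different device: for $u\in C_c^\infty$ and, say, $z$ in the positive orthant, it writes $|\nabla^j u(z)|\le\int_{\Omega(z)}|\partial_1\cdots\partial_n\nabla^j u(y)|\,dy$ with $\Omega(z)=\{y:y_i\ge z_i\ \forall i\}$, and then applies Cauchy--Schwarz with the weight over this \emph{unbounded} region where $|y|\ge|z|$. Integrating from infinity exploits the compact support to eliminate the lower-order contribution outright; no localization, hence no remainder. Your scheme cannot reproduce the stated inequality without an additional Hardy-at-infinity step that you have not supplied.
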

\begin{proof}
    We use integration by parts to compute for $k>j>0$: $$\|u\|_{Q_j}^2=-\sum_i\int(\partial_i^2\nabla^{j-1}u\cdot\nabla^{j-1}u\langle z\rangle^{2j+1-n}+\partial_i\nabla^{j-1}u\cdot\nabla^{j-1}u\partial_i\langle z\rangle^{2j+1-n})\,.$$
    Noticing that $(\partial_i\langle z\rangle^{2j+1-n})^2\lesssim\langle z\rangle^{2j+1-n}\langle z\rangle^{2j-1-n}$, by Cauchy-Schwarz inequality, we have 
    $\|u\|_{Q_j}^2\lesssim \|u\|_{Q_{j-1}}(\|u\|_{Q_{j+1}}+\|u\|_{Q_j})$.
    By a weighted AM-GM inequality, we compute for any $\nu>0$, $\|u\|_{Q_j}^2\leq\nu \|u\|_{Q_{j+1}}^2+C(\nu)\|u\|_{Q_{j-1}}^2$.

    Now we prove the first interpolation inequality. Since $\nu$ is arbitrary, we only need to prove for $j=k-1$, which we can use induction on $k$ and the obtained inequality $\|u\|_{Q_{j}}\leq\nu \|u\|_{Q_{j+1}}+C(\nu)\|u\|_{Q_{j-1}}$ to conclude.

    For the second inequality, we borrow the idea of proof for the embedding (3.13) of Proposition 1  in our follow-up paper \cite{chen2024stability}. We can assume $u\in C^{\infty}_c$ by a density argument and consider WLOG $z\in\mathbb{R}^n$ with $z_i\geq0$ for all components. In the region $\Omega(z)=\{y\in\mathbb{R}^n,y_i\geq z_i\}$, we have $|y|\geq |z|$ for any $y\in\Omega(z)$. By Leibniz's rule and  Cauchy-Schwarz's inequality, we have 
$$\begin{aligned}|\nabla^j{u}(z)|&\lesssim \int_{\Omega(z)}|\partial_1\partial_2\cdots\partial_n\nabla^j{u}(y)|dy\\&\lesssim \|\nabla^{j+n}{u}\langle y\rangle^{i+{(n+1)}/{2}}\|_2(\int_{|y|\geq |z|}\langle y\rangle^{-2i-n-1}dy)^{1/2}\,.\end{aligned}$$
    We thus collect the pointwise bound and conclude the proof of the inequality.
\end{proof}
We denote the terms as lower order terms (l.o.t. for short) if their $\rho_k$-weighted $L^2$-norms are bounded by $\nu E_k+C(\nu)E_0$ for any $\nu>0$. Notice that for $0<j\leq k$, $\nabla^{j}\bar{u}\nabla^{k-j}u$ are l.o.t. since we can estimate $$|\nabla^{j}\bar{u}|\rho_k^{1/2}\lesssim\langle z\rangle^{-2-j}\langle z\rangle^{k+(1-n)/2}\lesssim\langle z\rangle^{k-j+(1-n)/2}\,,$$
and thus {their $L^2_{\rho_k}$-norms} are bounded by $\|u\|_{Q_{k-j}}$.
Therefore, we collect the linear estimate via an integration by parts and $O(\lambda)$ notations as 
$$\begin{aligned}(\nabla^kL,\nabla^ku\rho_k)&\leq ([-1-\frac{k}{2}+\frac{1}{4}\frac{(z\rho_k)_z}{\rho_k}+2\bar{u}]\nabla^ku,\nabla^ku\rho_k)\\&+C(1+\|\nabla^4u\|_{\infty})\lambda E_k^2+\nu E_k^2+C(\nu)E_0^2\,.\end{aligned}$$ We can compute the damping as $$-1-\frac{k}{2}+\frac{n}{4}+\frac{1}{4}\frac{(2k+1-n)10^{-3k}|z|^{2k+1-n}}{1+10^{-3k}|z|^{2k+1-n}}+\frac{2}{1+|z|^2/8}\leq-\frac{1}{2} \,,$$
 where the last inequality is equivalent to $$(1+|z|^2/8)(2k+2-n+10^{-3k}|z|^{2k+1-n})-8(1+10^{-3k}|z|^{2k+1-n})\geq0\,,$$ which can be implied by an AM-GM inequality via $$\begin{aligned}
     &3n+\frac{10^{-3k}}{8}|z|^{2k+3-n}\geq(2k+3-n)(\frac{3n}{2})^{\frac{2}{2k+3-n}}(\frac{10^{-3k}}{8(2k+1-n)})^{1-\frac{2}{2k+3-n}}|z|^{2k+1-n}\\&>10^{-3k}/8(10^{6k/(2k+3-n)})|z|^{2k+1-n}>8\times10^{-3k}|z|^{2k+1-n}\,.
 \end{aligned}$$
We collect the linear estimate by choosing a small enough $\nu$ to get
\begin{equation*}
(\nabla^kL,\nabla^ku\rho_k)\leq (-\frac{1}{4}+C(1+\|\nabla^4u\|_{\infty})\lambda)E_k^2+CE_0^2\,.
\end{equation*}

For the nonlinear term $\nabla^kN$, by Leibniz's rule, we know that it will be a linear combination of $\nabla^{k-j}u\nabla^{j}u$. For a typical term, assume WLOG that $j\leq k/2$. By the interpolation lemma, we have 
\begin{equation}
    \|\nabla^{k-j}u\nabla^{j}u\|_{\rho_k}\leq \|u\|_{Q_{k-j}}\|\nabla^ju\langle z\rangle^j\|_{\infty}\lesssim \|u\|_{Q_{k-j}}\|u\|_{Q_{j+n}}\lesssim(\|u\|_{Q_{k}}+\|u\|_{Q_{0}})^2.\label{nonlinear hk}
\end{equation}
Therefore, we can collect the nonlinear estimate 
\begin{equation*}
(\nabla^kN,\nabla^ku\rho_k)\leq C(E_k+E_0)^2 E_k\,.
\end{equation*}

For the error term, we notice that $\nabla^kF_i$ is  $O(|z|^{-2-k})$ {for $|z| \gg 1$}. Therefore, it is square integrable with the weight $\rho_k$ and we can estimate  \begin{equation*}
(\nabla^kF_i,\nabla^ku\rho_k)\leq C\lambda(1+\|\nabla^4u\|_{\infty}) E_k\,.
\end{equation*}

We estimate the viscous term using integration by parts twice
\begin{align*}
(\nabla^kV_i,\nabla^ku\rho_k)&= - (\nabla^ku_i,\nabla^ku_i\rho_k)-(\nabla^ku_i,\nabla^ku(\rho_k)_i)\\
& \leq\frac{1}{2}(\nabla^ku,\nabla^ku(\rho_k)_{ii})\leq CE_k^2\,.
\end{align*}

Finally, we collect all the above estimates and arrive at 
\begin{equation}\label{eq-hkc}
\begin{aligned}
    &\frac{1}{2}\partial_\tau E^2_k\leq (-\frac{1}{4}+C(1+\|\nabla^4u\|_{\infty})\lambda)E_k^2+CE_0^2+C(E_k+E_0)^2 E_k\\&+C\lambda(1+\|\nabla^4u\|_{\infty}) E_k+C\lambda E_k^2\,.
    \end{aligned}
\end{equation}
Using again the interpolation inequality in Lemma \ref{lem:int}, we know that $\|\nabla^4u\|_{\infty}+\|u\|_{\infty}\lesssim E_k+E_0$.
\roy{Combined with \eqref{L2-col} and \eqref{eq-hkc}, we know that there exists a sufficiently small constant $\mu>0$, such that for}
\begin{equation}\label{def_energy}
    E^2=E_0^2+\mu E_k^2\,,
\end{equation}
we end up with the estimate 
\begin{equation*}
    \frac{1}{2}\partial_\tau E^2\leq(-\frac{1}{10}+C(1+E)\lambda)E^2+CE^3+C\lambda (1+E)E+C\lambda E^2\,.
\end{equation*}
Hence, we have the following energy estimate \begin{equation}\label{energy}\partial_\tau E\leq(-\frac{1}{10}+CE\lambda+CE)E+C\lambda +C\lambda E\,,\end{equation}
for some universal constant $C > 0$. \roy{We will combine estimate \eqref{energy} with the estimates of $\lambda$ to conclude the bootstrap argument of finite time blowup in the next subsection.} 
\subsection{Finite time blowup}\label{sec4}
\roy{
Recall that for each $i \in \{1, \cdots, n\}$, $\lambda_i(\tau)$ solves the ODE \eqref{lambn}, hence $\lambda_i\in C^1$ in $\tau$. 
Define
\[
\lambda(\tau)=\max_{1\le i\le n}\lambda_i(\tau),\qquad \gamma(\tau)=\frac{1}{\lambda(\tau)}.
\]
Since $\lambda$ is the maximum of finitely many $C^1$ functions, it is locally Lipschitz; hence by Rademacher's theorem it is differentiable for almost every $\tau\ge0$ and absolutely continuous. At times \(\tau>0\) where \(\lambda(\tau):=\max_{1\le i\le n}\lambda_i(\tau)\) is differentiable, the following holds. Either the maximizer is unique, in which case there exists a neighborhood of \(\tau\) on which \(\lambda=\lambda_i\) and hence \(\lambda_\tau=(\lambda_i)_\tau\). Or multiple branches are active, i.e.\ \(I(\tau):=\{i:\lambda_i(\tau)=\lambda(\tau)\}\) contains more than one index; in that case, differentiability of the pointwise maximum forces all active branches to have the same derivative at \(\tau\). In fact, for any $i\in I(\tau)$ and $\tau_1\to\tau^-$, since we have $\lambda(\tau_1)\geq\lambda_i(\tau_1)$, $\lambda(\tau)=\lambda_i(\tau)$, and $\tau_1-\tau<0$,  we obtain 
$$\partial_\tau\lambda(\tau)=\lim_{\tau_1\to\tau^-}\frac{\lambda(\tau_1)-\lambda(\tau)}{\tau_1-\tau}\le\lim_{\tau_1\to\tau^-}\frac{\lambda_i(\tau_1)-\lambda_i(\tau)}{\tau_1-\tau}=\partial_\tau\lambda_i(\tau).$$
Taking the minimum yields $\partial_\tau\lambda(\tau)\le\min_{i\in I(\tau)}\partial_\tau\lambda_i(\tau)$. Similarly, taking limits from the right yields
$$\partial_\tau\lambda(\tau)=\lim_{\tau_1\to\tau^+}\frac{\lambda(\tau_1)-\lambda(\tau)}{\tau_1-\tau}\ge\max_{i\in I(\tau)}\partial_\tau\lambda_i(\tau).$$Hence, we conclude $\max_{i\in I(\tau)}\partial_\tau\lambda_i(\tau)=\min_{i\in I(\tau)}\partial_\tau\lambda_i(\tau)$, and one may choose any active index \(i(\tau)\in I(\tau)\) to obtain
\[
\lambda_\tau(\tau)=(\lambda_{i(\tau)})_\tau(\tau)
\qquad\text{for a.e.\ }\tau.
\]}
\roy{
Therefore, for almost every $\tau$,
\[
\gamma_\tau(\tau)=-\frac{\lambda_\tau(\tau)}{\lambda(\tau)^2}
=-\frac{(\lambda_{i(\tau)})_\tau(\tau)}{\lambda_{i(\tau)}(\tau)^2}.
\]
Using \eqref{lambn} for $\lambda_{i(\tau)}$ yields, for a.e.\ $\tau$,
\[
\gamma_\tau(\tau)
=1+4\sum_{j=1}^n u_{i(\tau)i(\tau)jj}(0,\tau)\,\frac{\lambda_j(\tau)}{\lambda_{i(\tau)}(\tau)}.
\]
Since $\lambda_j(\tau)\le \lambda(\tau)=\lambda_{i(\tau)}(\tau)$, we obtain the bound (for a.e.\ $\tau$)
\[
|\gamma_\tau(\tau)-1|
\le 4\sum_{j=1}^n |u_{i(\tau)i(\tau)jj}(0,\tau)|\frac{\lambda_j(\tau)}{\lambda_{i(\tau)}(\tau)}
\le 4n\,|\nabla^4 u(0,\tau)|.
\]}

The interpolation inequality in Lemma \ref{lem:int} yields $|\nabla^4u\roy{(0,\tau)}|\leq CE$, where WLOG, we assume $C\geq 1$. Defining $G=E\gamma$, we have, \roy{almost everywhere, that}\begin{equation}\label{odegamma}|\gamma_\tau -1|\leq 4nC\frac{G}{\gamma}\,,\end{equation}

We will show that $E$ decays like $1/\tau$  \roy{via deriving an ODE} of $G$ from \eqref{energy}:
\begin{equation}\label{odeg}\begin{aligned}
    G_\tau &\leq (-\frac{1}{10}+CE\lambda+CE)G+C+C E+E(1+4nC\frac{G}{\gamma})\\&\leq(-\frac{1}{10}+2C\frac{1}{\gamma})G+C+8nCG^2(\frac{1}{\gamma}+\frac{1}{\gamma^2})\,.
\end{aligned}\end{equation}
{We choose $\hat{C}_u(0)= 1/\gamma(0)$ small enough, for example $\hat{C}_u(0) \leq1/(10000nC^2)$} such that if we start from $G(0)< 100C$, we will have the bootstrap estimate \begin{equation}
    \label{bootstrapbound}G< 100C,\quad \gamma\geq \gamma(0)\geq10000nC^2,
\end{equation} for all time via a standard bootstrap argument {that we provide the proof below.}
\begin{proof}[Proof of the bootstrap bound \eqref{bootstrapbound}] 
We note from \eqref{odegamma} that $\partial_\tau\gamma(0) > 0$. By local wellposedness that will be established in Appendix \ref{sec:appendix}, the solution exists for a short time. {Assume that the estimate \eqref{bootstrapbound} is false}, namely that, there exists a rescaled time $\tau_0>0$ such that \eqref{bootstrapbound} holds for $0<\tau<\tau_0$ and either $G(\tau_0)\geq 100C$ or $\gamma(\tau_0)\leq \gamma(0)$. We compute by \eqref{odegamma} that in $(0,\tau_0)$, $\partial_\tau\gamma\geq1-\frac{400nC^2}{10000nC^2}>0$ which rules out the latter case. As a consequence, we estimate by the ODE for $G$ given in \eqref{odeg}: for  $\tau \in (0,\tau_0)$,  $$G_\tau \leq -\frac{1}{20} G+C+\frac{G}{50}\,.$$ By continuity, we estimate at the time $\tau = \tau_0$, $\partial_\tau G(\tau_0)<0 $ which concludes that the former case cannot hold either. Hence, the estimates \eqref{bootstrapbound} must hold for all time. 
\end{proof}
From the bootstrap estimates \eqref{bootstrapbound}, we have an  estimate for   $\gamma$ by \eqref{odegamma}: 
$$|\gamma_\tau -1|\leq\frac{400nC^2}{\gamma}\,.$$
Thus  $\frac{\gamma}{\tau}\to 1$ as $\tau\to\infty$, and we have by \eqref{def:CuCl} and \eqref{cun} that $$|(\hat{C}_u)_t+1|=|\frac{(\hat{C}_u)_t t_\tau}{\hat{C}_u}+1|=|\frac{(\hat{C}_u)_\tau}{\hat{C}_u}+1|=|\hat{c}_u+1|=|c_u|\leq\frac{n}{4\gamma}\,,\quad \tau_t=\frac{1}{\hat{C}_u}\,.$$
We can finally show that there exists a blowup time $T>0$, such that $$\lim_{t\to T}\frac{\hat{C}_u}{T-t}=1\,,\quad\lim_{t\to T}\frac{\tau}{|\log(T-t)|}=1\,.$$ 
Moreover, defining $\kappa=\sum_i\frac{1}{\lambda_i}$, we have the following ODE $$\partial_\tau\kappa=n+4\sum_i\sum_j u_{iijj}(0)\frac{\lambda_j}{\lambda_i}\leq n+4nC\frac{G}{\gamma^2}\kappa\leq n+\frac{400nC^2}{\gamma^2}\kappa\,. $$
Therefore, for sufficiently large $\tau$, $\kappa_\tau \leq n+800nC^2\frac{\kappa}{\tau^2}$. We integrate and get for sufficiently large $\tau$, $\kappa\leq n\tau+1600n^2C^2\log\tau$. Since $\lambda=\max{\lambda_i}=1/\gamma\to1/\tau$, we have $n\leq\liminf\sum_i\frac{\lambda}{\lambda_i}\leq\limsup\sum_i\frac{\lambda}{\lambda_i}\leq n$, namely that $\lambda_i\tau\to\frac{\lambda_i}{\lambda}\to1$, and thus we arrive at the law $$\lim_{t\to T}\frac{\hat{C}_{l,i}}{\sqrt{(T-t)|\log(T-t)|}}=\lim_{t\to T}\sqrt\frac{\hat{C}_u}{{(T-t)\lambda_i\tau}}= 1\,.$$ 
Taking $C_0=100C, \lambda_0=1/(10000nC^2)$ and for initial data satisfying the assumption of Theorem \ref{thm1}, we know that $\gamma(0)=1/\lambda$ and   $u=g$ defined in the rescaled formulation satisfy the bootstrap assumption.
 From the asymptotics of $\hat{C}_u, \hat{C}_{l,i}$, we conclude Theorem \ref{thm1}.

\section{Numerical experiments}\label{sec:num}\roy{In this section, we present numerical experiments that support our analysis. In particular, we demonstrate that the normalization conditions introduced in Section~\ref{sec:dy} preserve a stable blow-up and allow us to capture the logarithmic correction numerically, both in 1D and in higher dimensions with nonradial perturbations.
}
 We remark that our proofs in the paper are derived independently of the numerical results in this section. 

\textbf{Data availability statement:} The data and the code will be available upon request.

\roy{In practice, we would like to recover the blow-up profile without assuming it a priori.
}Therefore in our numerical computation, we solve  \eqref{drf} with initial data as a large perturbation to the approximate steady state. We will compute $\hat{u}$ dynamically and recall that  our choice of normalization $\hat{c}_{l,i}$, $\hat{c}_u$ in \eqref{cun} ensures that $\hat{u}(0)$, $\nabla^2\hat{u}(0)$ remain constants in time.  

\subsection{1D case}

We choose the initialization that is more general than the assumption of our theorem as $$\hat{u}(0,z)=(1+z^2/8+z^4/10)^{-1}\,, \quad\hat{C}_u(0)=1\,, \quad\lambda\roy{(0)}=1\,.$$
At each time step $\tau_m$, we first determine the normalization constants as $$\hat{c}_u=-\hat{u}(0)-\frac{\lambda\hat{u}_{zz}(0)}{\hat{u}(0)}\,,\hat{c}_l=\frac{\hat{c}_u}{2}+\hat{u}(0)+\frac{\lambda\hat{u}_{zzzz}(0)}{2\hat{u}_{zz}(0)}\,.$$
Next, we can determine the time step $k$ via the standard numerical stability conditions for a convection-diffusion equation, and then we use the 4-th order Runge-Kutta scheme for the discretization in time and a cubic spline for the discretization in space to evolve the equation $$\hat{u}_\tau=\hat{c}_u \hat{u}-\hat{c}_l z\hat{u}_z+\hat{u}^2+\lambda\hat{u}_{zz}\,.$$
Finally, we update our $\lambda$ for time $\tau_{m+1}=\tau_m+k$ by a 4-th order Runge-Kutta discretization scheme  of the ODE $$(\log\lambda)_\tau=(2\hat{c}_l+\hat{c}_u)\,.$$

We use a fixed nonuniform mesh in space with even symmetry \roy{imposed}, and our computational domain is $[0,10^5]$ with $2000$ grid points in space. We report that after $10^9$ iterations in time, the rescaled time $\tau\approx3.9887\times10^5$ and $\log(\hat{C}_u)\approx -3.9886\times10^5$. This means that the amplitude of the solution in the physical space grows $\exp(3.9886\times10^5)$ times, which is impossible to compute if we do not use a dynamic rescaling formulation. We remark that the computation is very stable and we stopped after $10^9$ iterations only due to concerns of computational time. In theory, we can compute for an arbitrarily long time and witness an arbitrary growth of the amplitude in the physical space.

To see that the profile $\hat{u}$ indeed converges to the steady state $\bar{u}$, we plot the profile after $m=5 \times 10^4, 5 \times 10^5, 5 \times 10^6, 10^7,1.5  \times 10^7, 2 \times 10^7$ iterations and compare it with the steady state. We see that the profile converges fast; see Figure \ref{fig:profile} on the left. Furthermore, we investigate the convergence rate of the profile. Define \roy{the residual} $\gamma(\tau)=\sup_{z}\{|\hat{u}(\tau)-\bar{u}|\}$. We plot $\gamma\tau$ after $2 \times 10^7$ until $5 \times10^7$ iterations, corresponding to $\tau\in[218,11638]$. We see that the residue is approximately of order $1/\tau$; see Figure \ref{fig:profile} on the right. However, we are only using a finite domain, and as time becomes larger, the effect of the finite domain size becomes more obvious, and $\gamma\tau$ will increase slightly.

\begin{figure}
    \centering
    \includegraphics[width=5.8cm]{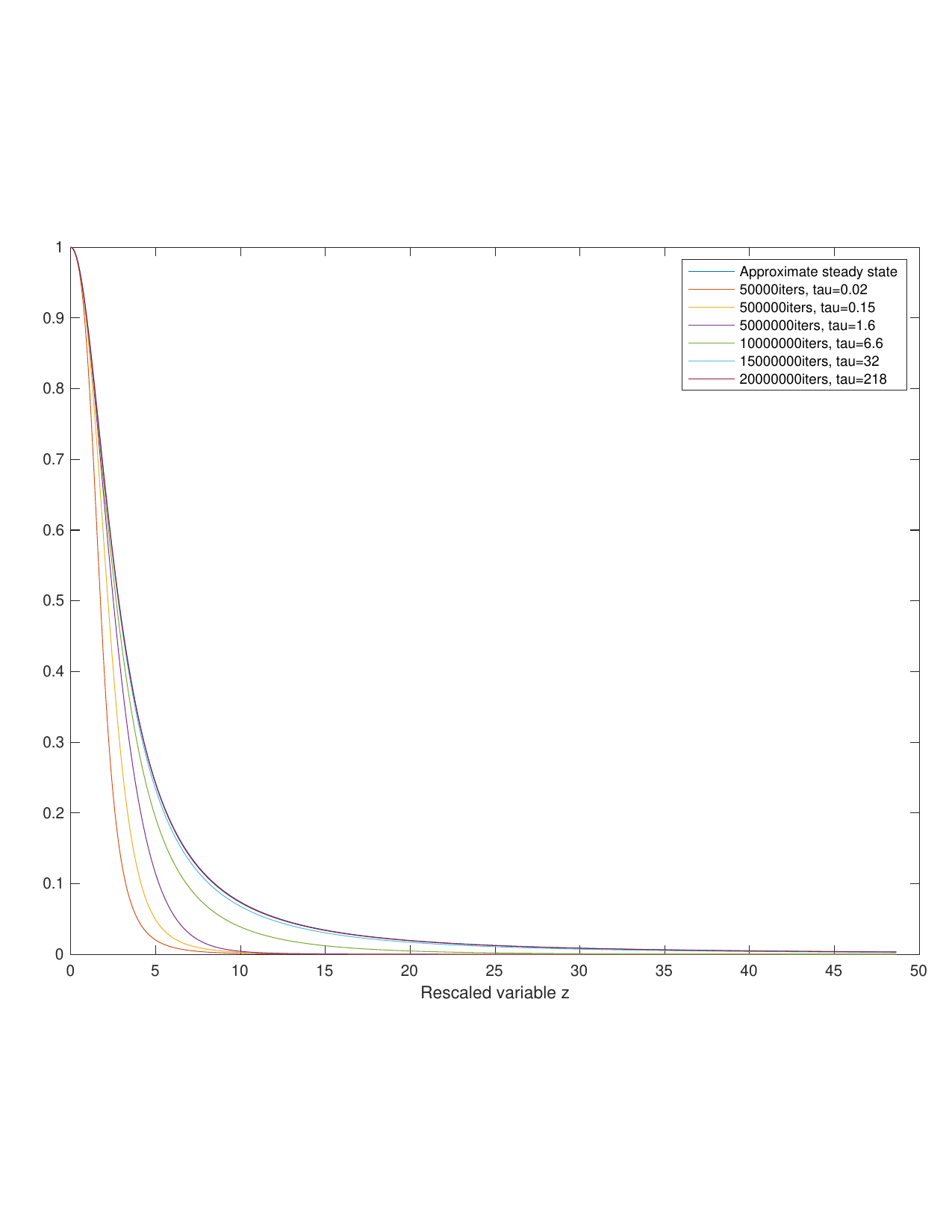}
\includegraphics[width=6cm]{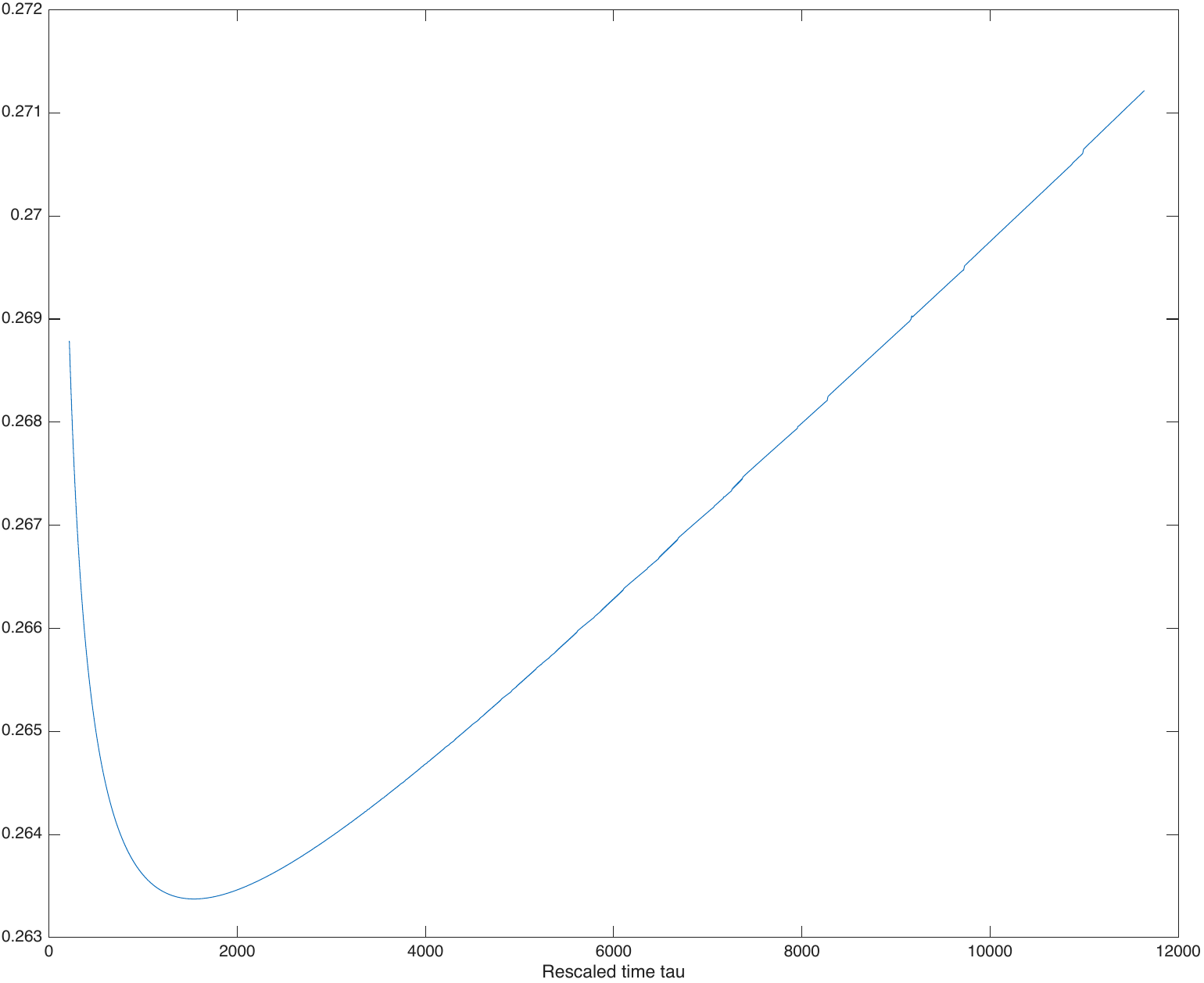}
\caption{Left: Comparison of the profile to the approximate steady state. Right: Plot of the residue multiplied by the rescaled time.}
    \label{fig:profile}
\end{figure}

To see that we can recover the correct convergence rate, we plot $(1/2-\hat{c}_l)\tau$ and $(\hat{c}_u+1)\tau$ in time to see that they indeed converge to the correct constant $5/8$ and $1/4$ respectively and therefore will give the correct log-scaling; see for example indicated by \eqref{cu}. Again, for visualization purposes, we only plot for the $2 \times 10^7$ until $5 \times 10^7$ iterations, and we can see that they converge to the desired constants very fast; see Figure \ref{fig:cl} on the left.
\begin{figure}[ht]

\centering
\includegraphics[width=6cm]{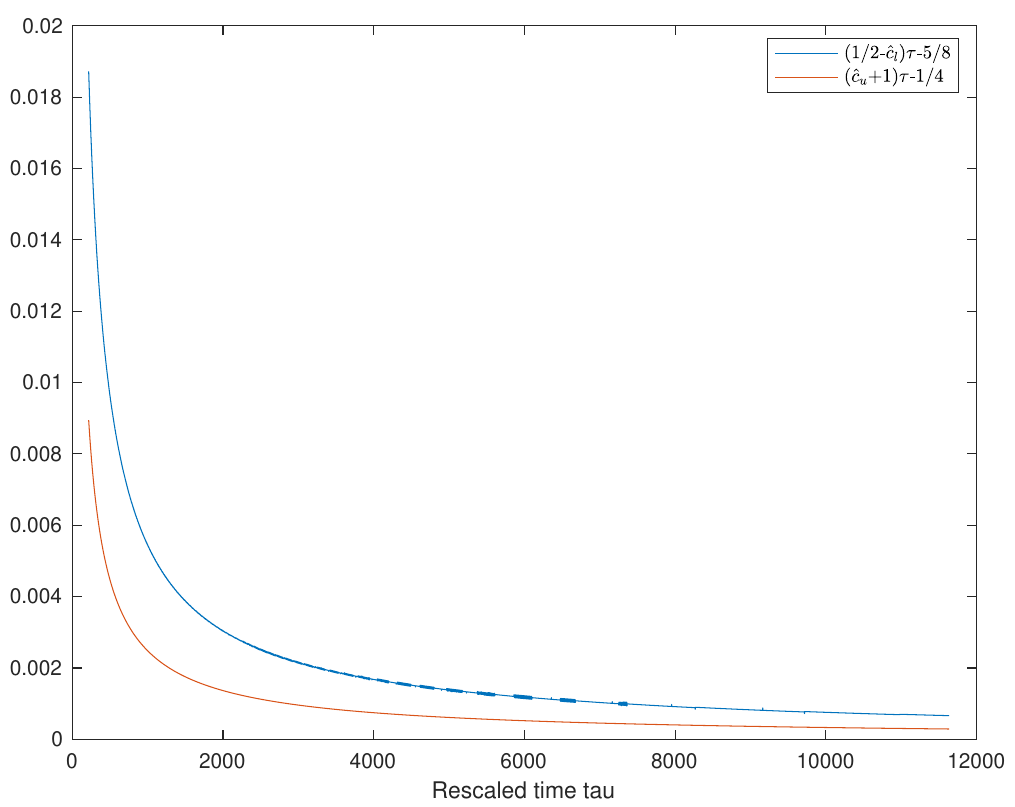}
\includegraphics[width=6cm]{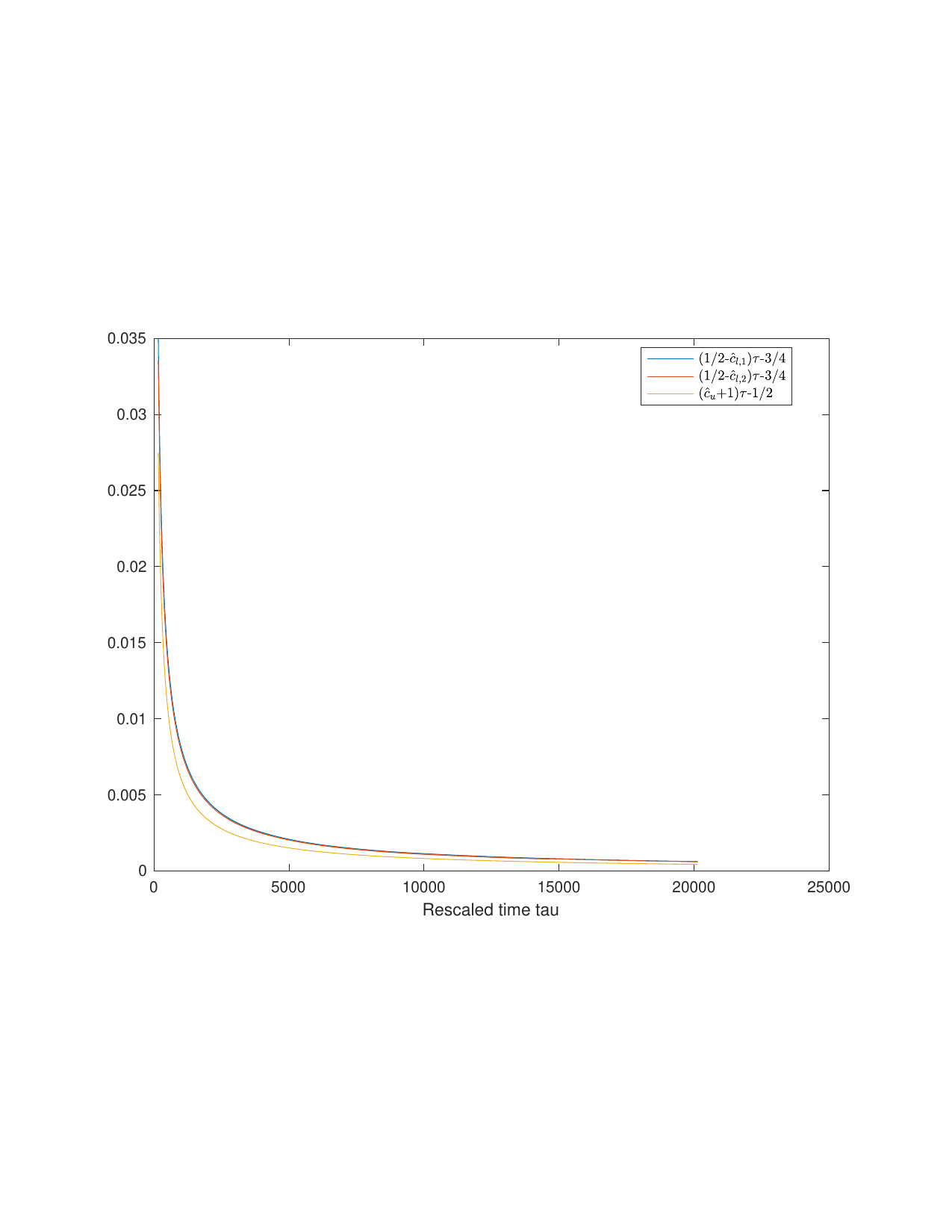}
\caption{Left: Fitting the law of the normalization constants in 1D. Right: 2D}
\label{fig:cl}
\end{figure}

\subsection{2D case}
For the 2D example, we choose a nonradial initialization  as $$\hat{u}(0,x,y)=(1+(x^2+y^2)/8+x^4/100)^{-1}\,, \quad\hat{C}_u(0)=1\,, \quad\lambda_1=\lambda_2=1\,.$$
At each time step $\tau_m$, we first determine the normalization constants as $$\hat{c}_u=-\hat{u}(0,0)-\frac{\lambda_1\hat{u}_{xx}(0,0)+\lambda_2\hat{u}_{yy}(0,0)}{\hat{u}(0,0)}\,,$$$$\hat{c}_{l,1}=\frac{\hat{c}_u}{2}+\hat{u}(0,0)+\frac{\lambda_1\hat{u}_{xxxx}(0,0)+\lambda_2\hat{u}_{xxyy}(0,0)}{2\hat{u}_{xx}(0,0)}\,,$$$$\hat{c}_{l,2}=\frac{\hat{c}_u}{2}+\hat{u}(0,0)+\frac{\lambda_1\hat{u}_{xxyy}(0,0)+\lambda_2\hat{u}_{yyyy}(0,0)}{2\hat{u}_{yy}(0,0)}\,.$$
Next, we can determine the time step $k$ via the standard numerical stability conditions for a convection-diffusion equation, and then we use the 4-th order Runge-Kutta scheme for the discretization in time and a cubic spline for the discretization in space to evolve the equation $$\hat{u}_\tau=\hat{c}_u \hat{u}-\hat{c}_{l,1} x\hat{u}_x-\hat{c}_{l,2} y\hat{u}_y+\hat{u}^2+\lambda_1\hat{u}_{xx}+\lambda_2\hat{u}_{yy}\,.$$
Finally, we update our $\lambda_1$, $\lambda_2$ for time $\tau_{m+1}=\tau_m+k$ by a 4-th order Runge-Kutta discretization scheme   of the ODE $$(\log\lambda_i)_\tau=(2\hat{c}_{l,i}+\hat{c}_u)\,,\quad i=1,2\,.$$

We use a fixed nonuniform mesh in space with even symmetry imposed, and our computational domain is $[0,4000]^2$ with $200$ gridpoints in space in each direction. 
To see that we can recover the correct convergence rate,
we plot $(1/2-\hat{c}_{l,i})\tau$ and $(\hat{c}_u+1)\tau$ as a function of $\tau$ for the $4 \times 10^6$ until $ 10^7$ iterations to see that they indeed converge to the correct constant $3/4$ and $1/2$ respectively and therefore will give the correct log-scaling; see for example indicated by \eqref{cun}. We can see that they converge to the desired constants very fast; see Figure \ref{fig:cl} on the right.

\vspace{0.2in}
{\bf Acknowledgments.} The research of T. Hou is supported by NSF Grant DMS-2205590, DMS-2508463, and the Choi Family Gift Fund. V.T. Nguyen is supported by the National Science and Technology Council (NSTC) of Taiwan. We would like to express our sincere gratitude to Changhe Yang for engaging in valuable discussions, particularly concerning the 
$n$-dimensional case, and Xiang Qin for discussions on the local well-posedness. Our thanks also go to Peicong Song for insightful conversations during the early stages of this project. Additionally, we are grateful to Dr. Jiajie Chen for his stimulating discussions. \roy{We also thank the anonymous referees for their very constructive comments and suggestions, which greatly improved the quality of the original manuscript.}

\appendix
\section{Local well-posedness} \label{sec:appendix}
In this section, we provide a proof of the local well-posedness. Namely, in equation \eqref{drf}, the local Cauchy problem for the perturbation $u=\hat{u}-\bar{u}$ is well-posed in the space $\mathcal{E}_k$ defined in \eqref{def:Ek} for {a short time}. The proof consists of two main steps: we first reduce LWP of the perturbation in the space $\mathcal{E}_k$ with a singularly weighted $L^2$-norm, to the space induced by the norm $\|\cdot\|_{\mathcal{Q}_k}=\|\cdot\|_{{{Q}_0}}+\|\cdot\|_{{{Q}_k}}$ with a regular $L^2$-norm defined in \eqref{Dj}, thanks to the vanishing conditions and interpolation inequalities. Noticing that the approximate profile lies in $\mathcal{Q}_k$ as well, so that we can work directly on the LWP of the whole solution of the original equation instead. {We use a standard fixed point argument to establish the LWP for the original problem, in particular, we shall use Duhamel's representation, the boundedness of the heat kernel and nonlinear estimates in $\mathcal{Q}_k$ to establish a contraction mapping and conclude the LWP.}

 We first aim at establishing the local well-posedness of $a$ in the original equation \eqref{sml} in ${\mathcal{Q}}_k$. We follow the argument given in \cite{BCjam96} and \cite{Weiiumj80} for the case of bounded domains.  Essentially, we use Duhamel's principle to solve the semilinear heat equation, and use the parabolic regularizing effect of the heat kernel to conclude the proof.
 \paragraph{Boundedness of the heat kernel:}

The solution to the linear heat equation with the initial data $g$ can be expressed as the convolution:
\begin{equation*}
e^{t\Delta}g=K_t\ast g\,,
\end{equation*}
where
$$K_t=\frac{1}{(4\pi t)^{n/2}} e^{-\frac{|x|^2}{4t}}\,, \quad \text{and}\,\, f \ast g(x) = \int_{\Rb^{\roy{n}}} f(y) g(x - y) dy\,.$$

We first show that the heat kernel defines a bounded operator on ${\mathcal{Q}}_k$. In fact, a {direct} calculation of the Gaussian, via boundedness of its moments, implies that $K_t$ has the following regularizing effect for any $i\in\mathbb{R}$, $0<t<1$: \begin{equation}
    \|\langle x\rangle^{|i|}K_t\|_1\lesssim 1\,.\label{bound on ek}
\end{equation}
Notic\roy{ing} that the weight satisfies $\langle x\rangle^i\lesssim\langle y\rangle^{|i|}\langle x-y\rangle^i$, {we estimate by using the boundedness of the heat kernel and a reproof of Young's inequality as:}
$$\begin{aligned}
    |\langle x\rangle^i\nabla^je^{t\Delta}g|&=\roy{|}\int \langle x\rangle^iK_t(y)\nabla^jg(x-y)dy\roy{|}\\&\lesssim(\int \langle y\rangle^{|i|}K_t(y)(\langle x-y\rangle^i\nabla^jg(x-y))^2dy)^{1/2}\|\langle y\rangle^{|i|}K_t\|^{1/2}_1\,,
\end{aligned}$$
where we use the Cauchy-Schwarz inequality. Using \eqref{bound on ek}, we estimate $$\|\langle x\rangle^i\nabla^je^{t\Delta}g\|_2^2\lesssim\int \langle y\rangle^{|i|}K_t(y)(\langle x-y\rangle^i\nabla^jg(x-y))^2dxdy\lesssim\roy{\|}\langle x\rangle^i\nabla^jg\|_2^2\,.$$
Therefore, we conclude $\|e^{t\Delta}g\|_{{\mathcal{Q}}_k}\lesssim\|g\|_{{\mathcal{Q}}_k}$.

\paragraph{LWP in the original variables:}
Identical to the $H^k$ estimates of the nonlinear terms \eqref{nonlinear hk},  we have  $$\|gf\|_{\mathcal{Q}_k}\lesssim\|g\|_{\mathcal{Q}_k}\|f\|_{\mathcal{Q}_k} \,.
$$
We thus conclude nonlinear estimates using boundedness of the heat kernel as $$\|e^{t\Delta}(gf)\|_{{\mathcal{Q}}_k}\lesssim\|gf\|_{{\mathcal{Q}}_k}\lesssim\|g\|_{{\mathcal{Q}}_k}\|f\|_{{\mathcal{Q}}_k}\,.$$
{The Duhamel's formula for \eqref{sml} reads as
$$M(a)(x,t)=e^{t\Delta}a(x,0)+\int_0^te^{(t-s)\Delta}a^2(x,s)ds\,.$$
 The above formula defines a solution map $a \mapsto M(a)$ in ${\mathcal{Q}}_k$, with the estimates 
 $$\|M(a)(\cdot,t)\|_{{\mathcal{Q}}_k}\leq C\|a(\cdot,0)\|_{{\mathcal{Q}}_k}+C\int_0^t\|a(\cdot,s)\|^2_{{\mathcal{Q}}_k}ds\,,$$
 \begin{equation}
     \|M(a)(\cdot,t)-M(b)(\cdot,t)\|_{{\mathcal{Q}}_k}\leq C\|a(\cdot,0)-b(\cdot,0)\|_{{\mathcal{Q}}_k}+ C\int_0^t\|a-b\|_{{\mathcal{Q}}_k}\|a+b\|_{{\mathcal{Q}}_k}ds\,,\label{ab dif}
 \end{equation}
 which is a contraction in the space $A$ equipped with norm $\|\cdot\|_{{\mathcal{Q}}_k,C([0,T])}$, where $$A=\{a(\cdot,0)=a_0: \|a\|_{{\mathcal{Q}}_k,C([0,T])}\leq 2C\|a_0\|_{{\mathcal{Q}}_k}\}\,,\quad\|\cdot\|_{{\mathcal{Q}}_k,C([0,T])}=\sup_{0\leq t\leq T}\|\cdot\|_{{\mathcal{Q}}_k}\,,$$ for $T<\min\{1,\frac{1}{4C^2\|a_0\|_{{\mathcal{Q}_k}}}\}$ small. We conclude the local well-posedness by a standard fixed-point argument to establish the existence and uniqueness of the fixed-point map $M(a)=a$.} 
 
 For continuous dependence, consider two initial data $a_0,b_0$. We know by the fixed point argument that for $T<\min\{1,\frac{1}{4C^2\|a_0\|_{{\mathcal{Q}_k}}},\frac{1}{4C^2\|b_0\|_{{\mathcal{Q}_k}}}\}$, the solutions $a$, $b$ have $\|\cdot\|_{{\mathcal{Q}}_k,C([0,T])}$ norms bounded by $2C\|a_0\|_{{\mathcal{Q}_k}}$, $2C\|b_0\|_{{\mathcal{Q}_k}}$ respectively. By \eqref{ab dif}, we estimate $$\|a(\cdot,t)-b(\cdot,t)\|_{{\mathcal{Q}}_k}\leq C\|a_0-b_0\|_{{\mathcal{Q}}_k}+ C\int_0^t\|a-b\|_{{\mathcal{Q}}_k}(\|a-b\|_{{\mathcal{Q}}_k}+2C\|a_0\|_{{\mathcal{Q}_k}})ds$$ and by \roy{the} Bihari–LaSalle inequality, a nonlinear generalization of Gronwall's inequality, we have that for $K=2C\|a_0\|_{{\mathcal{Q}_k}}$, $$\|a(\cdot,t)-b(\cdot,t)\|_{{\mathcal{Q}}_k}\leq\frac{KC\|a_0-b_0\|_{{\mathcal{Q}}_k}e^{CKt}}{K+C\|a_0-b_0\|_{{\mathcal{Q}}_k}-C\|a_0-b_0\|_{{\mathcal{Q}}_k}e^{CKt}}\,.$$
 \roy{Thus, we conclude the} continuous dependence for a short time.

Moreover, we establish that $a$ and $a_{ii}$ are continuously differentiable at the origin for a short time when $a$ is locally well-posed in ${\mathcal{Q}}_k$. 
Noticing that $$a(0,t)=a(0,0)+\int_0^ta_t(0,s)ds=a(0,0)+\int_0^t(a^2(0,s)+\Delta a(0,s))ds\,,$$
$$a_{ii}(0,t)=a_{ii}(0,0)+\int_0^t(a^2(x,s)+\Delta a(x,s))_{ii}|_{x=0}ds\,,$$
we only need to upper-bound $\|\nabla^j(a^2+\Delta a)\|_\infty$ for a short time, $j=0,2.$ It can be bounded via interpolation inequalities in Lemma \ref{lem:int} as \begin{equation}
\sup_{j\leq4}\|\nabla^ja\|_{\roy{_\infty}}+\sup_{j\leq4}\|\nabla^ja\|_{\roy{_\infty}}^2\lesssim\|a\|_{\mathcal{Q}_k}+\|a\|^2_{\mathcal{Q}_k}\,.\label{final vsi}
\end{equation}

\paragraph{Existence of solution in the regular norm:}
 Notice that the scaling parameters fix the values of $\hat{u}$, $\hat{u}_{ii}$ at the origin and recall   \eqref{32}, where one can solve the scaling parameters $\hat{C}_u$ and $\hat{C}_{l,i}$ uniquely as
\begin{equation}
    \label{resc con}\hat{C}_u(\tau)=\hat{C}_u(0)a(0,0)/a(0,t)\,,\quad \hat{C}_{l,i}(\tau)=\sqrt{\frac{a_{ii}(0,0)a(0,t)}{a_{ii}(0,t)a(0,0)}}\,,
\end{equation}
 which remain continuously differentiable and upper- and lower-bounded for a short time, as long as $a$ and $a_{ii}$ are continuously differentiable and hence upper- and lower-bounded for a short time at the origin.\footnote{Recall \eqref{def:renormua} that $\tau$ is related to $t$ via $dt/d\tau=\hat{C}_u$, which is well-defined for a short time.} Since $\hat{u}$ is just a rescaling of $a$ via \eqref{scaling:nd}, the existence and continuity in $\mathcal{Q}_k$ of $\hat{u}$ follow from that of $a$ and the boundedness and continuous differentiability of the scaling parameters.  Since $\bar{u}$ is in $\mathcal{Q}_k$, we know that $u=\hat{u}-\bar{u}$ is in $\mathcal{Q}_k$. Finally, we show that $\hat{u}$ defined via our rescalings \eqref{scaling:nd} satisfies the dynamic rescaling equation \eqref{drf}, which is true since the rescaling parameters $\hat{C}_u, \hat{C}_{l,i}$ defined above are continuously differentiable in time, thanks again to the regularity of $a$.

 \paragraph{Existence of solution in the singular norm:} Recall that our modulation conditions fix $\hat{u}$, $\hat{u}_{ii}$ at the origin, guaranteeing that the even perturbation $u=O(|z|^4)$ near the origin. We will show that its $L^2_\rho$-norm can be bounded by $\mathcal{Q}_k$. 
Similar to $L^2$ estimates of the viscous terms  in Section \ref{subl2}, we estimate
\begin{equation*}
    \begin{aligned}
\|u\|_{\rho}^2&\lesssim\int_{|z|\leq 1}u^2|z|^{-5-n}+\int_{|z|\geq 1}u^2\langle z\rangle^{1-n}\lesssim(\sup_I|u||z|^{-3})^2+\|u\|_{Q_0}^2\\&\lesssim\|\nabla^3u\|_\infty^2+\|u\|_{Q_0}^2\lesssim\|u\|_{\mathcal{Q}_k}^2\,,\end{aligned}
\end{equation*}
 where \roy{we have used \eqref{final vsi}} in the last inequality. We conclude \begin{equation}
\label{reg}\|u\|_{\mathcal{E}_k}\lesssim\|u\|_{\mathcal{Q}_k}\,,\end{equation} \roy{which implies} that $u$ exists and is continuous in time in ${\mathcal{E}}_k$ for a short time.

 \paragraph{Conclusion of the LWP proof:}

Finally, we establish uniqueness and continuous dependence on initial data. Suppose $\hat{u}^1,\hat{u}^2\in {\mathcal{E}}_k$ solves \eqref{drf} for the same initial data and that $\hat{u}^j-\bar{u}=O(|z|^4)$ near the origin for $j=1,2$. In particular, by interpolation inequality in Lemma \ref{lem:int}, they are both in $C^4$. Define $$\hat{C}_{u}^j(\tau)=\lambda_{1}^j(0)\exp{\Big(\int_0^\tau \hat{c}_{u}^j(s) ds\Big)},\quad\hat{C}_{l,i}^j(\tau)=\exp{\Big(\int_0^\tau -\hat{c}_{l,i}^j(s) ds\Big)}\,,$$
$$t^j(0)=0,\quad \frac{dt^j}{d\tau}=\hat{C}_{u}^j,\quad a^j(x,t)=(\hat{C}_{u}^j)^{-1}\hat{u}^j(x_1/\hat{C}_{l,1}^j,x_2/\hat{C}_{l,2}^j,\cdots,x_n/\hat{C}_{l,n}^j,\tau^j(t))$$
for $j=1,2$. Here we use the functions $t^j(\tau)$ and $\tau^j(t)$ to denote the time rescaling. \roy{Under the inverse transformation,} $a^j$ satisfies the semilinear heat equation in the original variables \eqref{sml}. By the LWP established previously for the original semilinear heat equation, we know that $a^1=a^2$. By the vanishing conditions $\hat{u}^j-\bar{u}=O(|z|^4)$, we deduce \roy{from} \eqref{resc con} that $$\hat{C}_{u}^1(\tau^1(t))=\hat{C}_{u}^2(\tau^2(t)),\quad \hat{C}_{l,i}^1(\tau^1(t))=\hat{C}_{l,i}^2(\tau^2(t))\,.$$ Finally we have $\frac{d\tau^1}{dt}=(\hat{C}_{u}^1(\tau^1(t)))^{-1}=(\hat{C}_{u}^2(\tau^2(t)))^{-1}=\frac{d\tau^2}{dt}$, hence $\tau^1=\tau^2$ and we conclude $\hat{u}^1=\hat{u}^2$.

Continuous dependence is  standard \roy{once the uniqueness is established}, since we have already established the continuous dependence of the solution $a$ in the physical variable of the original equation, and the rescalings from $a$ to $\hat{u}$ are continuously dependent on $a$. Now we have the continuous dependence of $u$ in ${{\mathcal{Q}}}_k$, and by \eqref{reg} and the trivial bound $\|\cdot\|_{{{\mathcal{Q}}}_k}\leq\|\cdot\|_{{{\mathcal{E}}}_k}$ via definition \eqref{Dj}, in ${{\mathcal{E}}}_k$.

\vspace{0.2in}
\noindent {\bf Conflict of interest.} The authors declare no conflict of interest.

\bibliographystyle{abbrv} 
\bibliography{ref}
\end{document}